\documentclass[12pt]{amsart}

\usepackage[utf8]{inputenc}
\usepackage[english]{babel}
\usepackage{amsmath}
\usepackage[noadjust]{cite}
\usepackage{amsthm}
\usepackage{amsfonts}
\usepackage{amssymb}
\usepackage{enumitem}
\usepackage{hyperref}
\usepackage{float}
\usepackage{graphicx}
\usepackage[margin=1.5in]{geometry}
\usepackage{etoolbox}
\apptocmd{\sloppy}{\hbadness 10000\relax}{}{}

\newtheorem{theorem}{Theorem}[subsection]
\newtheorem{theorem*}{Theorem}
\newtheorem{corollary}[theorem]{Corollary}
\newtheorem{corollary*}{Corollary}
\newtheorem{lemma}[theorem]{Lemma}
\newtheorem{prop}[theorem]{Proposition}

\theoremstyle{definition}
\newtheorem{definition}[theorem]{Definition}
\newtheorem{remark}[theorem]{Remark}

\binoppenalty=\maxdimen
\relpenalty=\maxdimen

\begin{document}
\title[The disk complex and topologically minimal surfaces]{The
disk complex and topologically minimal surfaces in the 3-sphere}
\author{Marion Campisi}
\address{San Jos\'{e} State University\\ San Jose, CA 95192}
\email{marion.campisi@sjsu.edu}
\author{Luis Torres}
\address{San Jos\'{e} State University\\ San Jose, CA 95192}
\email{luis.torres@sjsu.edu}
\maketitle

\begin{abstract}
We show that the disk complex of a genus $g>1$ Heegaard surface for the 3-sphere
is homotopy equivalent to a wedge of $(2g-2)$-dimensional spheres. This implies
that genus $g>1$ Heegaard surfaces for the 3-sphere are topologically minimal
with index $2g-1$.
\end{abstract}


\section{introduction}
Let $\Sigma$ be a compact, connected, orientable surface of genus $g>1$,
properly embedded in a compact, orientable 3-manifold $M$. The
\textit{disk complex} $\Gamma(\Sigma)$ is defined to be the simplicial complex
such that:

\begin{itemize}
\item vertices correspond to isotopy classes of essential curves that bound
compressing disks for $\Sigma$,
\item a set of $k$ vertices $\{v_1, \dots, v_k\}$ with mutually disjoint
representative curves in each isotopy class defines a $(k-1)$-simplex in
$\Gamma(\Sigma)$ with $v_1, \dots, v_k$ as its vertices. We denote the
$(k-1)$-simplex by $\langle \alpha_1, \dots, \alpha_k \rangle$, where $\alpha_i$
is a representative curve for the isotopy class corresponding to $v_i$.
\end{itemize}

In \cite{B}, Bachman introduced the \textit{topological index} of $\Sigma$ as
the smallest $n$ such that $\pi_{n-1}(\Gamma(\Sigma))$ is nontrivial when
$\Gamma(\Sigma)$ is non-empty or 0 if $\Gamma(\Sigma)$ is empty. If $\Sigma$ has
a well-defined topological index, i.e. either $\Gamma(\Sigma) = \emptyset$ or it
has a nontrivial homotopy group, then we say $\Sigma$ is
\textit{topologically minimal}.

Topologically minimal surfaces generalize two important classes of surfaces in
3-manifolds. Surfaces of index 0 are precisely those which are incompressible,
and surfaces of index 1 are those which are strongly irreducible. Moreover, a
number of well-known results for incompressible and strongly irreducible
surfaces generalize to topologically minimal surfaces. For example, given an
incompressible surface and a topologically minimal surface in an irreducible
3-manifold, Bachman \cite{B} showed that they may be isotoped so that their
intersection consists of loops which are essential on both surfaces.

After Bachman's initial work on topologically minimal surfaces, Bachman and
Johnson \cite{BJ} exhibited the first examples of topologically minimal surfaces
with arbitrarily high index. This was done by a construction involving gluing
copies of two-bridge link complements in chains along their boundary tori. This
established that higher index surfaces exist, as previously only surfaces of
indices 0, 1, and 2 were known. Lee generalized their result to show that
every 3-manifold containing an incompressible surface contains surfaces of
arbitrarily high topological index \cite{L1}. Campisi and Rathbun then showed
that there exist hyperbolic 3-manifolds containing surfaces of arbitrarily high
index \cite{CR}. Although these examples establish the existence of
topologically minimal surfaces in a variety of 3-manifolds, they are obtained
through highly specific and artificial constructions. The exception to this is a
result of Lee, which shows that bridge spheres for the unknot are topologically
minimal \cite{L2}, though their precise index is unknown.

The topology of the disk complex is not well-understood, which makes it
difficult to study its homotopy groups. This in turn makes it difficult to
identify topologically minimal surfaces and their indices. In this paper, we
adapt machinery of Harer \cite{H} and Ivanov \cite{I} used in the
study of the curve complex to the study of the disk complex in order to prove
Theorems 1 and 2 below.

Let $\Sigma$ be a compact, connected surface embedded in $S^3$ as a Heegaard
surface possibly with open disks removed and satisfying $-\chi(\Sigma) \geq 2$.
Denote the genus and number of boundary components of $\Sigma$ by $g$ and $b$,
respectively, and let
\[
d = \begin{cases}
2g-2 & \text{ if } g>0, b=0, \\
2g-3+b & \text{ if } g>0, b>0, \\
b-4 & \text{ if } g=0, b \geq 4.
\end{cases}
\]
Note that $d$ is well-defined and nonnegative since $-\chi(\Sigma) \geq 2$.

\begin{theorem*}
The disk complex $\Gamma(\Sigma)$ is $(d-1)$-connected.
\end{theorem*}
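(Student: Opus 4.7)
The plan is to induct on the complexity $-\chi(\Sigma)$, adapting the Harer--Ivanov template \cite{H, I} for high connectivity of curve complexes. The base case is $-\chi(\Sigma) = 2$, where the allowable surfaces are $\Sigma_{2,0}$, $\Sigma_{1,2}$, and $\Sigma_{0,4}$ with $d = 2, 1, 0$ respectively; each case reduces to an explicit computation (simple connectivity, connectivity, and nonemptiness, respectively) using the concrete structure of a low-genus Heegaard splitting of $S^3$.

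For the inductive step, assume the theorem for surfaces of strictly smaller complexity and let $\phi: S^k \to \Gamma(\Sigma)$ be continuous with $k \leq d(\Sigma) - 1$. After simplicial approximation, fix a reference compressing disk $D_0$ representing a vertex $v_0 \in \Gamma(\Sigma)$. The plan is to simplicially homotope $\phi$ so that its image lies in the closed star of $v_0$; since this star is the cone on $\operatorname{link}(v_0)$ from $v_0$, it is contractible, and the homotoped map then extends over $D^{k+1}$.

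To effect this homotopy, for each vertex $v$ of the image with representative $D_v$ placed in minimal position with $D_0$ and satisfying $D_v \cap D_0 \neq \emptyset$, I would perform outermost-arc surgery: take an arc of $D_v \cap D_0$ outermost on $D_0$, cutting off a subdisk $E$ whose interior is disjoint from $D_v$, and use $E$ to boundary-compress $D_v$. This yields one or two essential disks, each disjoint from $D_v$ and having strictly fewer intersections with $D_0$. Iterating simplicially pushes the image of $\phi$ into $\operatorname{link}(v_0) \cup \{v_0\}$. Since $\operatorname{link}(v_0)$ is identified with the disk complex of the compressed surface $\Sigma'$ (a join of disk complexes if $D_0$ is separating), and $-\chi(\Sigma') < -\chi(\Sigma)$, induction applies. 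Provided the key numerical inequality $d(\Sigma') \geq d(\Sigma) - 1$ holds, $\operatorname{link}(v_0)$ is at least $(k-1)$-connected, so the restriction of $\phi$ to the link is nullhomotopic, and coning over $v_0$ supplies the extension.

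The delicate ingredient --- and the main anticipated obstacle --- is the casewise verification that $d(\Sigma') \geq d(\Sigma) - 1$ across the piecewise definition of $d$: the various regimes of $(g, b)$, the effect of separating versus nonseparating compression, and transitions across the thresholds $g = 0$ or $b = 0$ must all be checked to satisfy a uniform inductive inequality. Secondary technical issues include ensuring essentiality of the surgery outputs and verifying that vertex-level surgery extends to a genuine simplicial homotopy across all simplices of the triangulation, which requires the successor disks at the vertices of any simplex in the image of $\phi$ to be mutually disjoint.
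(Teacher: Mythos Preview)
Your proposal follows the familiar ``push into the star of a fixed vertex via disk surgery, then induct on the link'' template that works for the disk complex of a single handlebody. The essential difficulty here, which the proposal does not address, is that $\Gamma(\Sigma)$ is the \emph{two-sided} disk complex of a Heegaard surface: a vertex may be represented by a compressing disk in either handlebody $H_1$ or $H_2$ of the splitting $S^3 = H_1 \cup_\Sigma H_2$. If your reference disk $D_0$ lies in $H_1$ and the disk $D_v$ lies in $H_2$, then $D_0 \cap D_v \subset H_1 \cap H_2 = \Sigma$, so the intersection is just the finite set $\partial D_0 \cap \partial D_v$. There are no arcs of $D_v \cap D_0$ on $D_0$, and hence no outermost arc to surger along; the key step ``take an arc of $D_v\cap D_0$ outermost on $D_0$'' is simply unavailable. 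One can try instead to band $\partial D_v$ along a subarc of $\partial D_0$, but the resulting curves have no reason to bound disks on either side, so this does not produce new vertices of $\Gamma(\Sigma)$. This is not a technicality: it is exactly what distinguishes the two-sided disk complex (which is not contractible) from the one-sided disk complex of a handlebody (which is, by McCullough's argument along the lines you sketch). As a secondary issue, the identification of $\operatorname{link}(v_0)$ with the disk complex of the compressed surface is not quite correct either, since curves on the compressed surface passing through the scar disks do not lift to curves on $\Sigma$ disjoint from $\partial D_0$.

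The paper's proof avoids disk surgery entirely. It sets up a correspondence between simplicial maps into $\Gamma(\Sigma)$ and parametrized families of smooth real-valued functions on $\Sigma$: given a simplicial sphere $h:T\to\Gamma(\Sigma)$, one realizes $h$ by a family $\{f_t\}_{t\in T}$, extends across a ball $B$ using convexity of the space of smooth functions, and then invokes Ivanov's singularity-theoretic theorem that, provided $\dim B \le d$, the family can be perturbed so that every $f_t$ has an essential nonsingular level curve. The crucial point, and the place where the Heegaard structure of $S^3$ enters, is a lemma of Gadgil: any Morse function on $\Sigma$ determines a handlebody filling in which \emph{every} regular level curve bounds a disk, so essential level curves are automatically compressing. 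This is what replaces the failed surgery step and supplies the extension $B\to\Gamma(\Sigma)$.
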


\begin{theorem*}
The disk complex $\Gamma(\Sigma)$ is homotopy equivalent to a space of dimension
$d$.
\end{theorem*}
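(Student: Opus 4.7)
The plan is to show that $\Gamma(\Sigma)$ deformation retracts onto a subcomplex of dimension $d$, following the scheme Harer and Ivanov developed for the arc and curve complexes. The argument would proceed by induction on $-\chi(\Sigma)$, with Theorem 1 available to handle connectivity issues along the way.

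First I would identify a natural $d$-dimensional subcomplex $\Gamma_0 \subseteq \Gamma(\Sigma)$: the subcomplex spanned by \emph{reduced} disk systems, namely collections of disjoint, pairwise non-isotopic compressing disks that cut $\Sigma$ into pieces of prescribed minimal complexity. A direct topological count --- analogous to counting curves in a pants decomposition, but accounting for disks from each handlebody of the Heegaard splitting --- should show that every maximal such system contains exactly $d+1$ disks, so that $\Gamma_0$ has dimension exactly $d$.

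For the inductive step, the key identification is that the link $\mathrm{lk}(D) \subseteq \Gamma(\Sigma)$ of a vertex $D$ maps naturally into the disk complex $\Gamma(\Sigma_D)$, where $\Sigma_D$ is the surface obtained by compressing $\Sigma$ along $D$. Since $-\chi(\Sigma_D) < -\chi(\Sigma)$, the inductive hypothesis gives that $\Gamma(\Sigma_D)$, and hence $\mathrm{lk}(D)$, is homotopy equivalent to a complex of dimension at most $d-1$; thus each star $\mathrm{st}(D)$ deformation retracts to a cone of dimension at most $d$. To assemble these local retractions into a global retraction onto $\Gamma_0$, I would define on each simplex $\sigma \notin \Gamma_0$ a canonical vertex to remove, producing a face $\tau \subsetneq \sigma$ closer to $\Gamma_0$, and verify that these choices are consistent across face inclusions.

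The principal obstacle is twofold. First, compressing along $D$ may move $\Sigma_D$ between the three cases in the definition of $d$ --- a non-separating compression of a closed surface drops genus by one, while a separating compression may disconnect $\Sigma$, and compressions of a surface with boundary may introduce or merge boundary components --- so a careful case-by-case check is required to verify that $d$ drops by exactly the amount needed for the induction to close up. Second, exhibiting a canonical, globally consistent vertex-removal procedure is combinatorially delicate; this is where I would most closely mimic Harer's original argument, likely by ordering the disks of a simplex according to their intersection pattern with a fixed reference system in $\Gamma_0$ and always deleting the ``most redundant'' disk, or by packaging the collapse into a discrete Morse function on $\Gamma(\Sigma)$ whose critical cells all lie in dimensions $\leq d$.
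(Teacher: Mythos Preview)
Your inductive outline shares the broad idea---relate links in $\Gamma(\Sigma)$ to disk complexes of simpler surfaces---but there is a concrete gap in the inductive step. You propose that $\mathrm{lk}(D)$ maps naturally into $\Gamma(\Sigma_D)$, where $\Sigma_D$ is obtained by \emph{compressing} along $D$. This map does not exist in general: compression can make curves in the link inessential. For instance, if $\partial D$ is nonseparating and $\alpha$ is the separating curve that bounds a one-holed torus containing $\partial D$, then $\alpha$ is a compressing curve disjoint from and non-isotopic to $\partial D$, yet after compressing along $D$ that one-holed torus becomes a disk, so $\alpha$ is trivial in $\Sigma_D$. The correct move---and the one the paper makes---is to \emph{split} $\Sigma$ along the curves (retaining the new boundary circles) rather than to compress; then every disjoint essential curve remains essential, and the link of a simplex $\langle \alpha_0,\dots,\alpha_k\rangle$ in the barycentric subdivision is the join of the disk complexes $\Gamma(F_i)$ of the complementary pieces. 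This is why the statement for surfaces with boundary is needed even to treat the closed case.

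The paper's execution is also structurally different from yours. It does not look for a global deformation retraction onto an explicit $d$-dimensional subcomplex (your $\Gamma_0$ is never pinned down, and the ``canonical vertex removal'' you flag as an obstacle is exactly the hard part). Instead, following Harer, it passes to the barycentric subdivision $\Gamma^0(\Sigma)$, filters by weight via $X_k=\{\text{barycenters of simplices of dimension }\geq k\}$, and builds $\Gamma^0(\Sigma)$ up from the discrete set $X_{3g-4}$ by coning each new vertex over its link in the already-assembled stage. An Euler-characteristic count on the pieces $F_i$ shows each such link has homotopy dimension at most $2g-3$, hence each cone at most $2g-2$. The passage between different numbers of boundary components is handled by a separate lemma: capping a single boundary circle gives a homotopy equivalence from an explicit subcomplex $\widehat{\Gamma}(\Sigma_g^b)$ onto $\Gamma(\Sigma_g^{b-1})$. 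Finally, Theorem~1 plays no role here; the connectivity result and the dimension bound are proved independently and only combined afterwards.
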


Suppose now that $\Sigma$ is a genus $g$ Heegaard surface for $S^3$. Since every
curve bounding non-trivial compressing disks for $\Sigma$ is an essential simple
closed curve in $\Sigma$, we note that the disk complex $\Gamma(\Sigma)$ may be
considered as a subcomplex of the curve complex $\mathcal{C}(\Sigma)$ (with
$\Sigma$ considered as an abstract surface). A famous theorem of Harer in
\cite{H} states that $\mathcal{C}(\Sigma)$ is homotopy equivalent to a wedge of
infinitely many $(2g-2)$-spheres, and in 2015, Birman, Broaddus, and Menasco
\cite{BBM} explicitly exhibited a homologically nontrivial $(2g-2)$-sphere in
$\mathcal{C}(\Sigma)$. The curves used to determine the $(2g-2)$-sphere in
$\mathcal{C}(\Sigma)$ all bound compressing disks for $\Sigma \subset S^3$ when
$\Sigma$ is embedded as a Heegaard surface, so these curves determine a
homologically nontrivial $(2g-2)$-sphere in $\Gamma(\Sigma)$. By the Hurewicz
theorem, it follows that $\pi_{2g-2}(\Gamma(\Sigma)) \neq 0$. Combining Theorems
1 and 2 together with the fact that $\pi_{2g-2}(\Gamma(\Sigma)) \neq 0$, we
obtain the first main result of this paper.

\begin{theorem*}
If $\Sigma$ is a genus $g>1$ Heegaard surface for $S^3$, the disk complex
$\Gamma(\Sigma)$is homotopy equivalent to a nontrivial wedge of
$(2g-2)$-spheres.
\end{theorem*}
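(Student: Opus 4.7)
The plan is to combine the two preceding theorems with the nontriviality input from Harer and Birman--Broaddus--Menasco, and then to invoke the standard fact from algebraic topology that an $(n-1)$-connected CW complex of dimension $n$ (for $n \geq 2$) is homotopy equivalent to a wedge of $n$-spheres.

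First I would specialize to the situation at hand. Since $\Sigma$ is a closed Heegaard surface of genus $g > 1$, we have $b = 0$, so the integer $d$ defined in the introduction equals $2g-2$. Theorem~1 then guarantees that $\Gamma(\Sigma)$ is $(2g-3)$-connected, and Theorem~2 guarantees that $\Gamma(\Sigma)$ is homotopy equivalent to a CW complex $X$ of dimension $2g-2$. Replacing $\Gamma(\Sigma)$ by $X$, my task reduces to producing a homotopy equivalence from $X$ to an explicit wedge of $(2g-2)$-spheres.

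Next I would carry out the wedge decomposition. Since $2g - 2 \geq 2$, the Hurewicz theorem gives $\pi_{2g-2}(X) \cong H_{2g-2}(X)$. Because $X$ has no cells above dimension $2g-2$, the cellular chain group $C_{2g-2}(X)$ is free abelian, and hence so is its subgroup $H_{2g-2}(X)$. Choosing a basis and realizing each basis element by a map $S^{2g-2} \to X$, I obtain a map $f \colon \bigvee_{\alpha} S^{2g-2} \to X$ that induces an isomorphism on $H_{2g-2}$; since both $X$ and the wedge are $(2g-3)$-connected and have no homology above degree $2g-2$, this forces $f$ to induce an isomorphism on all integral homology groups. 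As both sides are simply connected, Whitehead's theorem upgrades $f$ to a homotopy equivalence. To see that the resulting wedge is nontrivial, I appeal to the explicit $(2g-2)$-cycle in $\Gamma(\Sigma)$ constructed by Birman, Broaddus, and Menasco: its homology class is nonzero in $H_{2g-2}(\Gamma(\Sigma))$, so the indexing set cannot be empty.

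I do not anticipate a serious obstacle here, since all of the substantive topology has already been absorbed into Theorems~1 and~2. The only point requiring care is the freeness of $H_{2g-2}(X)$, which depends on $X$ having no cells above dimension $2g-2$ -- that is, on the sharp dimension bound supplied by Theorem~2 rather than merely the connectivity statement of Theorem~1. Without Theorem~2 one could still conclude $(2g-3)$-connectedness, but nothing would prevent higher-dimensional cells from contributing torsion or additional homotopy that would break the wedge-of-spheres conclusion.
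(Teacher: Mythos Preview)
Your argument is correct and follows the same route as the paper, which simply combines Theorems~1 and~2 with the Birman--Broaddus--Menasco nontriviality result; in fact you have spelled out the Hurewicz/Whitehead step that the paper leaves implicit. The only detail worth flagging is that the BBM sphere is a priori nontrivial in $H_{2g-2}(\mathcal{C}(\Sigma))$, and one must note (as the paper does in Theorem~3.2.2) that the inclusion $\Gamma(\Sigma) \hookrightarrow \mathcal{C}(\Sigma)$ then forces nontriviality in $H_{2g-2}(\Gamma(\Sigma))$ as well.
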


Bachman conjectured that there were no topologically minimal surfaces in
$S^3$ \cite{B}. This was shown to be false by Appel and Gabai in Appel's
senior thesis \cite{Appel}, although this result was not published or publicly
available. Our present work independently confirms that the conjecture is false.
Indeed, the second main result of this paper is an immediate corollary of
Theorem 3.

\begin{corollary*}
If $\Sigma$ is a genus $g>1$ Heegaard surface for $S^3$, then it is
topologically minimal with index $2g-1$.
\end{corollary*}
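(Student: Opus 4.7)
The plan is to derive the corollary directly from Theorem 3 and the definition of topological index. First I would observe that because $g>1$, the Heegaard surface $\Sigma$ bounds essential compressing disks in each handlebody, so $\Gamma(\Sigma)$ is nonempty. Consequently the topological index is, by definition, the smallest $n\geq 1$ for which $\pi_{n-1}(\Gamma(\Sigma))$ is nontrivial, and the task reduces to pinpointing this $n$.

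Next I would apply Theorem 3 to replace $\Gamma(\Sigma)$ up to homotopy by a nontrivial wedge $W=\bigvee_{\alpha} S^{2g-2}$. Since $g>1$ we have $2g-2\geq 2$, so each wedge factor is simply connected and $(2g-3)$-connected. A standard consequence (e.g.\ via the Hurewicz theorem, or via the fact that a wedge of $(k-1)$-connected CW complexes is $(k-1)$-connected) is that $W$ itself is $(2g-3)$-connected. Thus $\pi_{n-1}(\Gamma(\Sigma))=0$ for every $n\leq 2g-2$, which rules out any index smaller than $2g-1$.

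To complete the computation, I would invoke the nonvanishing $\pi_{2g-2}(\Gamma(\Sigma))\neq 0$ that is already established in the introduction: the Birman--Broaddus--Menasco $(2g-2)$-sphere lies in $\Gamma(\Sigma)\subset\mathcal{C}(\Sigma)$, is homologically nontrivial there, and by Hurewicz therefore represents a nontrivial homotopy class. (Alternatively, for any nontrivial wedge of $(2g-2)$-spheres, $H_{2g-2}$ is a nonzero free abelian group and Hurewicz gives $\pi_{2g-2}\neq 0$.) Setting $n-1=2g-2$ identifies $n=2g-1$ as the smallest index with $\pi_{n-1}(\Gamma(\Sigma))\neq 0$, so $\Sigma$ is topologically minimal of index $2g-1$.

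Since all of the heavy lifting sits inside Theorem 3, there is essentially no obstacle here; the only subtlety worth flagging in the write-up is to justify why the homotopy groups of $\Gamma(\Sigma)$ below dimension $2g-2$ actually vanish, for which I would simply cite the connectivity of a wedge of $(2g-2)$-spheres rather than chase any calculation.
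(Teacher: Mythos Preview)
Your argument is correct and matches the paper's approach: the paper presents this as an immediate corollary of Theorem~3 (stated with no further proof), and you have simply spelled out the obvious deduction from the wedge-of-spheres description together with the definition of topological index.
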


Here is a brief outline of this paper. In Section 2, we show that simplicial
maps into the disk complex correspond to families of smooth, non-degenerate,
real-valued maps, and vice-versa, which is based on the work of Ivanov \cite{I}
for the curve complex. These correspondences allow us to prove Theorem 1. In
Section 3, we investigate the homotopy type of the disk complex using
combinatorial methods based on the work of Harer \cite{H} and prove Theorem
2. Afterwards, we briefly review the work of Birman, Broaddus, and Menasco
\cite{BBM} that explicitly exhibits a topological essential $(2g-2)$-sphere in
the curve complex of the closed surface of genus $g$. These curves bound
compressing disks when the surface is considered as a genus $g$ Heegaard surface
$\Sigma$ for the 3-sphere. Thus, they determine a topological essential
$(2g-2)$-sphere in the disk complex of $\Sigma$, which completes the proof of
Theorem 3.


\section{Smooth functions and connectivity of the disk complex}


\subsection{Preliminaries}

Throughout this paper, we adopt the convention that whenever we consider a
surface embedded in $S^3$, we consider it embedded as a Heegaard surface
possibly with open disks removed. We will emphasize such an embedding by calling
it a \textit{standard embedding}. Moreover, we will always consider
a surface to be standardly embedded in $S^3$ when considering its disk complex.

Throughout Section 2, let $\Sigma$ denote a compact, connected, orientable
surface with $\chi(\Sigma) \leq -2$. The following smooth functions will be
fundamental in investigating the connectivity of the disk complex associated
to surfaces in the 3-sphere.

\begin{definition}
Let $f:\Sigma \rightarrow \mathbb{R}$ be a smooth function. If $\Sigma$ has
boundary, we will assume that $f(\partial S) = 0$, where 0 is a regular value of
$f$ and $f \geq 0$. A component $\alpha$ of $f^{-1}(A)$ is called
\textit{non-singular} if $\alpha$ does not contain a critical point of $f$.
\end{definition}

Note that the non-singular components of such smooth functions are simple closed
curves.

\begin{definition}
A smooth function $f: \Sigma \rightarrow \mathbb{R}$ is called
\textit{non-degenerate} if there is a non-trivial simple closed curve among the
non-singular components of its level sets.
\end{definition}

The rest of this section will be devoted to establishing a correspondence
between simplicial maps into $\Gamma(\Sigma)$ and families of smooth
non-degenerate functions. This correspondence will allow us to study simplicial
maps into $\Gamma(\Sigma)$ from the perspective of smooth functions. In
particular, the convexity of the space of smooth functions and the simplicial
approximation theorem will allow us to prove that any continuous mapping of a
sphere of small enough dimension into $\Gamma(\Sigma)$ is contractible. This in
turn proves that the homotopy groups of $\Gamma(\Sigma)$ up to a certain
dimension (which depends on $\chi(\Sigma)$) are trivial.


\subsection{Constructing a simplicial map from a family of smooth non-degenerate
functions}

Given a family $\{f_t: \Sigma \rightarrow \mathbb{R}\}_{t \in P}$ of smooth
non-degenerate functions, where $P$ is a parameter space, we show how to
construct a non-unique abstract simplicial complex $C$ and a simplicial map
$|C| \rightarrow \Gamma(\Sigma)$, where $|C|$ is the geometric realization of
$C$.

Since the functions $f_t$ are non-degenerate, for each $t \in P$ there is an
$A_t \in \mathbb{R}$ and a non-singular component $\alpha_t$ of $f_t^{-1}(A_t)$
which is a non-trivial simple closed curve. Perturb $f_t$ to a Morse function
$g_t$ with $g_t$ injective on its critical points and
$f_t^{-1}(A_t) = g_t^{-1}(A_t)$.

\begin{definition}
A \textit{handlebody filling} of $\Sigma$ is a pair $(H, \Sigma)$ such that $H$
is a 3-dimensional handlebody with $\partial H = \Sigma$.
\end{definition}

\begin{lemma}[\cite{G}, Lemma 8]
Given a surface $\Sigma$ and a Morse function $f: \Sigma \rightarrow \mathbb{R}$
which is injective on critical points, there is a handlebody filling
$(H,\Sigma)$ of $\Sigma$ with the property that every regular level component of
$f$ bounds a disk in $H$.
\end{lemma}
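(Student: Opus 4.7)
The plan is to decompose $\Sigma$ into planar pieces using regular level sets of $f$, thicken each piece to a 3-ball, and glue the balls along disks to build $H$.

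First, let $c_1 < \cdots < c_k$ be the critical values of $f$, pick regular values $t_i \in (c_i, c_{i+1})$, and cut $\Sigma$ along $\mathcal{L} = \bigcup_i f^{-1}(t_i)$ into connected compact subsurfaces $\mathcal{P}$, each containing at most one critical point. Standard Morse theory on oriented surfaces shows that every piece $P \in \mathcal{P}$ is either a disk (around a minimum or maximum), an annulus (containing no critical point), or a pair of pants (around a saddle); the ``one-circle-to-one-circle'' saddle is excluded because it would force a non-integer genus via $2 - 2g - 2 = \chi = -1$. In particular each piece is planar.

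For each piece $P \in \mathcal{P}$, embed $P$ in $S^2 = \partial B_P$ for a 3-ball $B_P$; then $S^2 \setminus P$ is a disjoint union of disks $\{D_c\}$, one for each boundary component $c$ of $P$. Assemble $H$ by identifying, for each curve $c \in \mathcal{L}$ shared between two pieces $P, P' \in \mathcal{P}$, the complementary disks $D_c \subset \partial B_P$ and $D_c' \subset \partial B_{P'}$ via a homeomorphism extending the gluing of $c$ in $\Sigma$; any self-homeomorphism of $S^1$ extends over $D^2$, so this is always possible. The resulting 3-manifold $H$ is built from 3-balls glued along 2-disks, giving a handle decomposition with only 0- and 1-handles, hence $H$ is a handlebody. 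Its boundary is the reassembly of the pieces $P$ along their shared circles, which recovers $\Sigma$ tautologically, and a routine Euler-characteristic count confirms $\operatorname{genus}(H) = \operatorname{genus}(\Sigma)$.

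Finally, every regular level circle $c$ of $f$ is a boundary component of some piece $P \in \mathcal{P}$ and therefore bounds the complementary disk $D_c$, pushed slightly into the interior of $B_P \subset H$, providing the required compressing disk. The main care point is verifying that $\partial H$ equals $\Sigma$ with its original identifications (rather than merely as an abstract surface of the correct genus), which reduces to choosing the $D^2$-extensions of boundary-circle homeomorphisms coherently; this is automatic by the Alexander trick. A secondary bookkeeping issue is ensuring that the ``intermediate annulus'' pieces (components of $f^{-1}([t_{i-1}, t_i])$ not containing a critical point) are tracked correctly, but this is routine.
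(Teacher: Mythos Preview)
The paper does not give its own proof of this lemma; it is quoted from an external reference (\cite{G}, Lemma~8) and used as a black box. So there is no in-paper argument to compare against.

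Your argument is the standard one and is essentially correct. One small imprecision worth tightening: in the final paragraph you assert that ``every regular level circle $c$ of $f$ is a boundary component of some piece $P \in \mathcal{P}$'', but literally this holds only for components of $f^{-1}(t_i)$ at your chosen separating regular values. For an arbitrary regular value $t$, a component $c$ of $f^{-1}(t)$ sits in the interior of some piece $P$. Since $P$ contains at most one critical point, the level sets of $f|_P$ provide an isotopy of $c$ in $P$ to one of the boundary circles of $P$, which then bounds the gluing disk $D_c$ (pushed into $H$) as you describe. With that one extra sentence the proof is complete. Your Euler-characteristic exclusion of the one-circle-to-one-circle saddle is correct and is exactly where orientability of $\Sigma$ is used.
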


Let $\widehat{\Sigma}$ be the closed surface that results from capping off any
boundary components of $\Sigma$ with disks. Extending $g_t$ to a Morse function
for $\widehat{\Sigma}$ which is injective on critical points, by Lemma 2.2.2 we
obtain a handlebody filling $(H,\widehat{\Sigma})$ of $\widehat{\Sigma}$. Now
consider $H$ embedded in $S^3$ such that $\partial H$ is standardly embedded in
$S^3$. Identify $\widehat{\Sigma}$ with $\partial H \subset S^3$ and $\alpha_t$
with its image under this embedding. Then $\alpha_t$ bounds a nontrivial
compressing disk for $\widehat{\Sigma} \subset S^3$. Identifying $\Sigma$ with
the surface obtained by deleting from $\widehat{\Sigma} \subset S^3$ the images
of the capping disks that produced $\widehat{\Sigma}$, we see that $\alpha_t$
bounds a nontrivial compressing disk for $\Sigma \subset S^3$.

Observe now that if $u$ lies in a sufficiently small neighborhood $U_t$ of
$t \in P$, then one of the components of $f_u^{-1}(A_t)$ must be non-singular
and isotopic to $\alpha_t$. Denote this component by $\alpha_{t_u}$. The
neighborhoods $\{U_t \, : \, t \in P\}$ form a covering of
the space $P$. Let $\{U_t \, : \, t \in V\}$ be a subcover (possibly finite in
the case of $P$ compact) of this covering, and let $C$ be the nerve of this
subcover. That is, $C$ is an abstract simplicial complex whose vertices are
identified with the elements of $V$, and $t_0, \dots, t_n$ form an $n$-simplex
in $C$ if and only if $U_{t_0} \cap \cdots \cap U_{t_n} \neq \emptyset$. We now
show that the correspondence $t \mapsto \langle \alpha_t \rangle$ gives a
well-defined simplicial map $|C| \rightarrow \Gamma(\Sigma)$. Let $\sigma$ be a
simplex of $C$, and let $W$ be the set of elements of $V$ corresponding to the
vertices of $\sigma$. By definition, we have
$\displaystyle \bigcap_{t \in W} U_t \neq \emptyset$, so let
$\displaystyle u \in \bigcap_{t \in W} U_t$. Then
\begin{enumerate}[label={(\roman*)}]
\item for all $t \in W$, $\alpha_{t_u}$ is isotopic to $\alpha_t$, i.e.
$\langle \alpha_{t_u} \rangle = \langle \alpha_t \rangle$;
\item the simple closed curves in $\{\alpha_{t_u} \, : \, t \in W\}$ are
non-singular components of level sets of $f_u$, so any two of these simple
closed curves are either disjoint or isotopic.
\end{enumerate}
By (ii), $\{\langle \alpha_{t_u} \rangle \, : \, t \in W\}$ determines a
simplex in $\Gamma(\Sigma)$, and by (i), this simplex coincides with the image
$\{ \langle \alpha_t \rangle \, : \, t \in W\}$ of $W$. Identifying $C$ with its
geometric realization $|C|$, we see that the correspondence
$t \mapsto \langle \alpha_t \rangle$ indeed determines a well-defined simplicial
map $|C| \rightarrow \Gamma(\Sigma)$.

A simplicial map obtained by the above process is said to \textit{realize} the
family of smooth maps $\{f_t: \Sigma \rightarrow \mathbb{R} \}_{t \in P}$.

Note that we do not require all smooth maps in the family to
be non-degenerate. Indeed, in the construction above, we only needed the
functions $f_t$ for $t \in V$ to be non-degenerate. In Section 2.3, we
will show that every simplicial map realizes a family of smooth non-degenerate
maps.

\begin{remark}
Let $\{f_t: \Sigma \rightarrow \mathbb{R}\}_{t \in P}$ be a family of smooth
non-degenerate functions, $Q$ be a subset of $P$, and
$\{f_t\}_{t \in Q}$ be a subfamily of $\{f_t\}_{t \in P}$. Given a simplicial
map $F: |C_Q| \rightarrow \Gamma(S)$ that realizes $\{f_t\}_{t \in Q}$, $F$ can
be extended to a simplicial map $F': |C_P| \rightarrow \Gamma(S)$ such that
$F'$ realizes $\{f_t\}_{t \in P}$ and $F'$ realizes $\{f_t\}_{t \in Q}$
when restricted to $|C_Q|$. By the construction above, note that $C_Q$ is the
nerve of a covering $\{U_t\}_{t \in V}$ of $Q$, and
$|C_Q| \rightarrow \Gamma(\Sigma)$ is determined by a fixed choice of
non-singular components $\alpha_t$ of $f_t^{-1}(A_t)$, $t \in V$. Every $U_t$
has the form $U_t' \cap Q$, where $U_t'$ is an open set in $P$. Moreover, $U_t'$
may be chosen so that $f_u^{-1}(A_t)$ has a non-singular component isotopic to
$\alpha_t$ for both $u \in U_t$ and $u \in U_t'$. Let $C_P$ be the nerve
of any covering containing $\{U_t' \, : \, t \in V\}$. Now take any choice of
simple closed curves determining $F':|C_P| \rightarrow \Gamma(\Sigma)$ that
includes $\alpha_t$, $t \in V$.  Then $F'$ realizes
$\{f_t\}_{t \in P}$ and, by construction, realizes $\{f_t\}_{t \in Q}$ when
restricted to $|C_Q|$.
\end{remark}

\subsection{Constructing a family of smooth non-degenerate functions from
a simplicial map}

To establish a correspondence between simplicial maps into $\Gamma(\Sigma)$
and families of smooth non-degenerate functions, it remains to show that we may
construct a family of smooth non-degenerate functions from a simplicial map.
Given such a simplicial map, we first show it realizes a family of
smooth maps which are not necessarily non-degenerate.

\begin{lemma} Every simplicial map $h: X \rightarrow \Gamma(\Sigma)$ realizes a
family of smooth functions $\{f_t: \Sigma \rightarrow \mathbb{R}\}_{t \in X}$.
\end{lemma}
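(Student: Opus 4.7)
The plan is to build the family $\{f_t\}_{t \in X}$ piecewise by simplex, and then to apply the realization procedure of Section 2.2 with an appropriate cover of $X$ and show the output is exactly $h$. Fix a distinct real number $A_v$ for each vertex $v$ of $X$. For every simplex $\sigma = \langle v_0, \ldots, v_k\rangle$ of $X$, the image $\{h(v_0), \ldots, h(v_k)\}$ spans a simplex of $\Gamma(\Sigma)$ because $h$ is simplicial, so one may choose pairwise disjoint representatives $\alpha_0^\sigma, \ldots, \alpha_k^\sigma$ of the isotopy classes $h(v_0), \ldots, h(v_k)$, together with pairwise disjoint closed annular neighborhoods $N_0^\sigma, \ldots, N_k^\sigma$.

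The main construction step is to produce, for each simplex $\sigma$ of $X$, a single smooth function $F_\sigma : \Sigma \to \mathbb{R}$ having $\alpha_i^\sigma$ as a non-singular component of $F_\sigma^{-1}(A_{v_i})$ for every $i$. I would define $F_\sigma$ on each $N_i^\sigma$ as a smooth bump with core curve $\alpha_i^\sigma$ attaining the value $A_{v_i}$ along the core, and extend smoothly outside $\bigcup_i N_i^\sigma$ by a function whose values stay strictly below $\min_i A_{v_i}$ (and which vanishes on $\partial\Sigma$ if $\partial\Sigma \neq \emptyset$). Then set $f_t := F_\sigma$ whenever $t$ lies in the interior of $\sigma$. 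This yields a well-defined family on all of $X$; it is not continuous across face boundaries, but continuity will not be needed in what follows because the data required by the realization procedure will be supplied directly from the construction.

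To verify that realizing $\{f_t\}$ recovers $h$, use the cover $\{U_t\}_{t \in X}$ in which $U_v := \mathrm{st}(v)$ for each vertex $v$ and $U_t := \mathrm{int}(\sigma)$ for $t$ in the interior of a positive-dimensional simplex $\sigma$, take the subcover indexed by $V := X^{(0)}$, and for each $v \in V$ select $\alpha_v := \alpha_v^{\{v\}}$. For any $u \in \mathrm{st}(v)$, say $u \in \mathrm{int}(\sigma)$ with $v \in \sigma$, the function $f_u = F_\sigma$ has $\alpha_v^\sigma$ as a non-singular component of $(f_u)^{-1}(A_v)$, and $\alpha_v^\sigma$ is isotopic to $\alpha_v$ since both represent $h(v)$. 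The nerve of $\{\mathrm{st}(v)\}_{v \in X^{(0)}}$ is $X$ itself, so the realized simplicial map sends $v \mapsto \langle \alpha_v\rangle = h(v)$ and coincides with $h$. The only technical care needed is in the bump-and-extend construction of $F_\sigma$: one must ensure no spurious non-singular level component is introduced at any height $A_{v_i}$ in a different isotopy class, which is handled by keeping the extension strictly below $\min_i A_{v_i}$ outside the neighborhoods $N_i^\sigma$.
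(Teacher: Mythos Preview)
Your construction has a genuine gap: the family $\{f_t\}$ you produce is deliberately discontinuous in $t$, and continuity in the parameter is exactly what this lemma must deliver. Throughout Section~2.2 and in Ivanov's setup, a ``family $\{f_t\}_{t\in P}$'' means a continuous (indeed smooth) map $P \to C^\infty(\Sigma,\mathbb{R})$; the clause ``if $u$ lies in a sufficiently small neighborhood $U_t$ of $t$, then $f_u^{-1}(A_t)$ has a component isotopic to $\alpha_t$'' is asserted as a \emph{consequence} of continuity, not as a condition you get to verify by hand for a bespoke cover. More to the point, Lemma~2.3.1 is only useful because its output is fed into Theorem~2.3.2 (Ivanov's approximation theorem) and into the proof of Theorem~2.4.1, where the family on the sphere $T$ is extended over the ball $B$ by invoking contractibility of $C^\infty(\Sigma,\mathbb{R})$. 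Both steps are meaningless for a piecewise-constant assignment $\sigma \mapsto F_\sigma$: you cannot approximate or null-homotope a map that is not continuous to begin with. So even granting that your cover $\{\mathrm{st}(v)\}$ has nerve $X$ and that the resulting simplicial map agrees with $h$, the object you have built does not satisfy the lemma in the sense the paper needs.

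The paper's proof fixes this by making a single global choice of representative per vertex rather than one per simplex: equip $\Sigma$ with a hyperbolic metric and take the geodesic $\alpha_v$ in the class $h(v)$. Geodesic representatives of disjoint isotopy classes are automatically disjoint, so one may choose pairwise disjoint tubular neighborhoods $N_v$ once and for all, define local functions $g_v$ on them, and patch these into a single smooth $F:X\times\Sigma\to\mathbb{R}$ via $G$ on $(X\times N_0)\cup\bigcup_v \mathrm{St}_v\times N_v$; then $f_t=F(t,\cdot)$ is genuinely smooth in $t$. As a minor secondary point, your ``bump attaining the value $A_{v_i}$ along the core'' makes every point of $\alpha_i^\sigma$ a critical point, so $\alpha_i^\sigma$ would not be a non-singular level component; you would need the function to cross the level $A_{v_i}$ transversally along $\alpha_i^\sigma$, as the paper's $g_v$ (with $A_v$ a regular value and $g_v^{-1}(A_v)=\alpha_v$) does.
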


Ivanov (\S 1.3, \cite{I}) showed this in the case of simplicial maps into
the curve complex $\mathcal{C}(\Sigma)$. Since $\Gamma(\Sigma)$ may be
considered as a subcomplex of $\mathcal{C}(S)$, it is clear that it also holds
for simplicial maps into the disk complex. We present Ivanov's proof below.

\begin{proof}
Let $h: X \rightarrow \Gamma(\Sigma)$ be a simplicial map and let $V$ be the
vertex set of $X$. For any vertex $v \in V$, let $\text{St}_v$ denote the closed
star of $v$ in the first barycentric subdivision of $X$. The set of barycentric
stars $\{\text{St}_v \, : \, v \in V\}$ form a closed covering of $X$, and the
geometric realization of its nerve coincides with $X$ (see Lemma 2.11 of
\cite{A} for a proof).

If $U_v$ is a sufficiently small neighborhood of $\text{St}_v$ for each
$v \in V$, then the geometric realization of the nerve of the open covering
$\{U_v \, : \, v \in V\}$ also coincides with $X$.

We now construct the family $\{f_t: \Sigma \rightarrow \mathbb{R}\}_{t \in X}$.
Equipping $\Sigma$ with a (complete) Riemannian metric of constant curvature
with geodesic boundary, for every vertex of $\Gamma(\Sigma)$ there is a unique
geodesic representative in the corresponding isotopy class. Since $h$ is
simplicial, vertices of $X$ are mapped to vertices of $\Gamma(\Sigma)$, so each
$v \in V$ corresponds to a geodesic $\alpha_v$ under $h$. For each $v \in V$,
choose a closed neighborhood $N_v$ of $\alpha_v$ such that
$N_v \cap \partial \Sigma = \emptyset$ and $N_v \cap N_w = \emptyset$ when
$\alpha_v \cap \alpha_w = \emptyset$. For each $v \in V$ choose a smooth
function $g_v: N_v \rightarrow \mathbb{R}$ and a regular value $A_v$ of $g_v$
such that $g_v^{-1}(A_v) = \alpha_v$ and $g_v > 0$. Lastly, choose a closed
neighborhood $N_0$ of $\partial \Sigma$ such that $N_0 \cap N_v = \emptyset$ for
all $v \in V$, and choose a smooth function $g_0 : N_0 \rightarrow \mathbb{R}$
such that 0 is a regular value of $g_0$, $g_0^{-1}(0) = \partial \Sigma$, and
$g_0 \geq 0$. Letting
\[
K = (X \times N_0) \cup
\left( \bigcup_{v \in V} \text{St}_v \times N_v \right) \subset X \times \Sigma,
\]
define $G : K \rightarrow \mathbb{R}$ by
\[
G(t,x) = \begin{cases}
g_0(x) & \text{if} \; (t,x) \in X \times N_0, \\
g_v(x) & \text{if} \; (t,x) \in \text{St}_v \times N_v.
\end{cases}
\]
Observe that if $\text{St}_v \cap \text{St}_w \neq \emptyset$, then $\{v, w\}$
is a simplex in $X$ and so $\alpha_v \cap \alpha_w = \emptyset$ and
$N_v \cap N_w = \emptyset$. Therefore the sets $\text{St}_v \times N_v$ are
pairwise disjoint and do not intersect $X \times N_0$, so $G$ is well-defined.
Now extend $G$ to a smooth function $F: T \times \Sigma \rightarrow \mathbb{R}$.
For each $t \in X$, define $f_t: \Sigma \rightarrow \mathbb{R}$ by
$f_t(x) = F(t,x)$. Then $\{f_t\}_{t \in X}$ is a family
of smooth functions, and $h$ realizes $\{f_t\}_{t \in X}$ by construction.
\end{proof}

If the simplicial complex $X$ is of sufficiently small dimension (which will
be made precise below), we will show that every simplicial map
$h: X \rightarrow \Gamma(\Sigma)$ also realizes a family of smooth
non-degenerate functions $\{f_t\}_{t \in X}$. The following theorem of Ivanov
will be instrumental.

\begin{theorem}[\cite{I}, Theorem 2.5]
Let
\[
d = \begin{cases}
-\chi(\Sigma) & \text{ if } \Sigma \text{ is closed,} \\
-\chi(\Sigma)-1 & \text{ if } \Sigma \text{ has exactly 1 boundary component,}\\
-\chi(\Sigma)-2 & \text{ otherwise.}
\end{cases}
\]
Then any family of smooth functions
$\{f_t:\Sigma \rightarrow \mathbb{R}\}_{t \in P}$ with $\dim P \leq d$ can be
approximated arbitrarily well by a family of smooth non-degenerate functions.
\end{theorem}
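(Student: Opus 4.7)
The plan is to prove this density statement by a parametric transversality argument in the space $\mathcal{F}(\Sigma)$ of smooth functions $\Sigma \to \mathbb{R}$ satisfying the boundary conditions from the definition of non-degeneracy. A family $\{f_t\}_{t \in P}$ corresponds to a continuous map $P \to \mathcal{F}(\Sigma)$, and it will suffice to show that the set $\mathcal{D} \subset \mathcal{F}(\Sigma)$ of degenerate functions is a countable union of smooth submanifolds of codimension at least $d+1$; standard transversality theory then yields that a generic $C^\infty$-small perturbation of the map avoids $\mathcal{D}$ whenever $\dim P \leq d$.

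As a preliminary reduction, I would restrict attention to the Morse locus: the set of $f \in \mathcal{F}(\Sigma)$ which are Morse with pairwise distinct critical values is open and dense, and its complement is a countable union of strata of positive codimension, so the codimension bound on $\mathcal{D}$ need only be verified on the Morse locus (with an inductive stratum-by-stratum argument handling non-Morse functions that might lie in $\mathcal{D}$).

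The heart of the argument is a topological characterization of degenerate Morse functions via the Reeb graph $R(f)$. An edge of $R(f)$ corresponds to a one-parameter family of non-singular level circles, and $f$ is degenerate precisely when every such edge carries only trivial circles (bounding a disk in $\Sigma$ or parallel to a boundary component). This forces each saddle of $f$ to be \emph{inessential}---its critical level must cap off a disk or split a disk into two---and each such condition is a codimension-$1$ constraint on the attaching data of that saddle. An Euler-characteristic count on $\Sigma$ combined with the Morse inequalities then shows that a degenerate Morse function must satisfy at least $d+1$ independent such constraints; the three cases in the definition of $d$ arise from how boundary components of $\Sigma$ supply ``free'' trivially-bounding circles and hence relax the count by $1$ or $2$.

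The main obstacle is carrying out this codimension count precisely: stratifying $\mathcal{D}$ by the decorated combinatorial type of $R(f)$, verifying each stratum is a smooth submanifold of $\mathcal{F}(\Sigma)$ with the predicted codimension, and ensuring the bound is sharp across all three boundary cases. Once the stratification is in place, Thom jet transversality (extended from finite-jet conditions to the global disk-bounding condition via a compactness argument on $\Sigma$) produces the desired arbitrarily-close perturbation and completes the proof.
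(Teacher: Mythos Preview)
The paper does not prove this statement; it is quoted directly from Ivanov and used as a black box. So there is no ``paper's own proof'' to compare against, and your task is really to reproduce Ivanov's argument. Your overall architecture---show that the degenerate locus $\mathcal{D}\subset\mathcal{F}(\Sigma)$ has codimension at least $d+1$ and then invoke parametric transversality---is indeed Ivanov's, so the framing is correct. The problem is in where you locate the codimension.

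You propose to treat the Morse locus as ``the heart of the argument'' and to extract codimension from the condition that each saddle be inessential, calling this ``a codimension-$1$ constraint on the attaching data.'' But among Morse functions the isotopy type of every regular level circle is locally constant, so ``this saddle is inessential'' is an open--closed condition, not a codimension-$1$ one. Worse, under the hypothesis $-\chi(\Sigma)\ge 2$ there are \emph{no} degenerate Morse functions at all. A short Reeb-graph computation shows that for a Morse $f$ with the given boundary conditions one has $b_1\bigl(R(f)\bigr)=g$; hence if $g\ge 1$ the Reeb graph contains a cycle, and any edge on that cycle corresponds to a non-separating, hence essential, level curve. In the planar case $g=0$, $b\ge 4$, the trivalent Reeb tree has an internal edge separating the $b$ boundary leaves into two groups of size at least $2$, again giving an essential level curve. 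Either way, $\mathcal{D}$ is disjoint from the Morse locus, and the Euler-characteristic/Morse-inequality count you sketch there yields no information.

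All of the content therefore lives in the non-Morse strata, which you relegate to a parenthetical ``inductive stratum-by-stratum argument.'' That is exactly the part that has to be done carefully: a function can be degenerate only if its critical points are sufficiently degenerate (birth--death singularities, higher $A_k$ points, coinciding critical values) to collapse the cycles of the Reeb graph, and it is the total codimension of \emph{those} singularity conditions---computed via multijet transversality and tied to $-\chi(\Sigma)$, with the boundary corrections producing the three cases for $d$---that gives the bound $d+1$. Your proposal would need to be reorganized around this mechanism rather than around constraints on Morse saddles.
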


Finally, using Theorem 2.3.2 we establish that a simplicial map actually
realizes a family of smooth non-degenerate functions as desired. In Section 2.4,
this result will serve a key role in determining that
$\Gamma(\Sigma)$ is $(d-1)$-connected, i.e.
\[
\pi_i(\Gamma(\Sigma)) \cong 0, \quad 1 \leq i \leq d-1.
\]

\begin{lemma}
Let
\[
d = \begin{cases}
-\chi(\Sigma) & \text{ if } \Sigma \text{ is closed,} \\
-\chi(\Sigma)-1 & \text{ if } \Sigma \text{ has exactly 1 boundary component,}\\
-\chi(\Sigma)-2 & \text{ otherwise.}
\end{cases}
\]
Every simplicial map $h: X \rightarrow \Gamma(\Sigma)$ with $\dim X \leq d$
realizes a family of smooth non-degenerate functions
$\{f_t: \Sigma \rightarrow \mathbb{R}\}_{t \in X}$.
\end{lemma}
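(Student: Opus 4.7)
The plan is to bootstrap Lemma~2.3.1 with Theorem~2.3.2. First, Lemma~2.3.1 supplies a family $\{f_t : \Sigma \to \mathbb{R}\}_{t \in X}$ of smooth (but not necessarily non-degenerate) functions that is realized by $h$. Next, Theorem~2.3.2 applies since $\dim X \leq d$, and provides a family $\{\tilde f_t\}_{t \in X}$ of smooth non-degenerate functions approximating $\{f_t\}$ arbitrarily well in the $C^\infty$ topology. The remaining task is to show that, for a sufficiently close approximation, $h$ realizes $\{\tilde f_t\}$ as well.

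To verify this, I would revisit the data built into the construction of Lemma~2.3.1. For each vertex $v$ of $X$, the function $f_v$ agrees with $g_v$ on a neighborhood $N_v$ of the geodesic representative $\alpha_v$ of $h(v)$, so $\alpha_v$ appears as a non-singular component of the regular level set $f_v^{-1}(A_v)$. Since $\alpha_v$ is compact and cut transversely, a standard transversality/implicit function argument shows that every sufficiently $C^\infty$-close perturbation $\tilde f_v$ still has a non-singular component $\tilde \alpha_v \subset \tilde f_v^{-1}(A_v)$ isotopic to $\alpha_v$. Consequently $\langle \tilde \alpha_v \rangle = h(v)$ for every vertex $v \in V(X)$. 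I would then rerun the Section~2.2 construction on $\{\tilde f_t\}$ using the fixed choice of data $(A_v, \tilde \alpha_v)$: continuity of the approximating family yields, for each $v$, an open neighborhood $\tilde U_v \subset X$ on which $\tilde f_u^{-1}(A_v)$ retains a non-singular component isotopic to $\tilde \alpha_v$. After shrinking and applying Remark~2.2.3 so that the nerve of $\{\tilde U_v\}$ is identified with $X$, the realizing simplicial map agrees with $h$ on vertices, and hence equals $h$ since simplicial maps are determined by their values on vertices.

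The main obstacle is not a single deep step but the uniformity required to pull everything together simultaneously: the $C^\infty$-closeness in Theorem~2.3.2 must be chosen small enough that (i) at \emph{every} vertex $v \in V(X)$ the transverse curve $\alpha_v$ persists in its isotopy class under perturbation, and (ii) the neighborhoods $\tilde U_v$ can be chosen so that their nerve recovers the combinatorial structure of $X$. Both requirements are open conditions on the family $\{\tilde f_t\}$, so they are satisfied by any sufficiently good approximation, and Remark~2.2.3 supplies the flexibility to make the open-cover bookkeeping match $h$ exactly.
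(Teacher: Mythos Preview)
Your proposal is correct and follows essentially the same route as the paper's proof: apply Lemma~2.3.1 to produce a family $\{f_t\}$, invoke Theorem~2.3.2 to approximate by a non-degenerate family, and then argue that for a sufficiently close approximation the curves $\alpha_v$ persist (up to isotopy) so that rerunning the Section~2.2 construction with the nerve of small neighborhoods of the $\mathrm{St}_v$ reproduces $h$. The only cosmetic difference is that the paper allows the regular value to move slightly to some $A_v' \in [A_v-\varepsilon, A_v+\varepsilon]$ rather than insisting the same $A_v$ works, but your transversality argument makes this distinction immaterial.
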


\begin{proof}
By Lemma 2.3.1, any simplicial map $h: X \rightarrow \Gamma(\Sigma)$ realizes a
family of smooth functions $\{f_t: \Sigma \rightarrow \mathbb{R}\}_{t \in X}$.
Let $\{\alpha_v: v \in V\}$ be the simple closed curves described in the proof
of Lemma 2.3.1. By Theorem 2.3.2,
$\{f_t: \Sigma \rightarrow \mathbb{R}\}_{t \in X}$ can be approximated
arbitrarily well by a family of smooth non-degenerate functions
$\{f_t'\}_{t \in X}$. Recalling the construction described in Section 2.2, we
will repeat the arguments and show how to obtain $h$ from such an approximation
$\{f_t'\}_{t \in X}$ via this construction. This will prove that $h$ realizes a
family of smooth non-degenerate functions.

Take sufficiently small neighborhoods $U_v'\subset U_v$ of $\text{St}_v$ for
each $v \in V$ together with a good enough approximation $\{f_t'\}_{t \in X}$
such that, for each $v \in V$, there is a regular value
$A'_t \in [A_t-\varepsilon, A_t+\varepsilon]$ for some $\varepsilon > 0$ such
that $(f_t')^{-1}(A_t')$ contains a non-singular component $\alpha_t'$ which is
isotopic to $\alpha_t$. Following the rest of the construction, the
correspondence $t \mapsto \langle \alpha_t' \rangle$ obtained is exactly the
same as the correspondence $t \mapsto \langle \alpha_t \rangle$ since
$\langle \alpha_t \rangle = \langle \alpha_t' \rangle$, so
$t \mapsto \langle \alpha_t' \rangle$ determines a mapping
$X \rightarrow \Gamma(\Sigma)$ that coincides with $h$. Therefore $h$ realizes
$\{f_t'\}_{t \in X}$.
\end{proof}

\subsection{Connectivity of the disk complex}

We are now ready to prove the main result of Section 2.

\begin{theorem}
Let
\[
d = \begin{cases}
-\chi(\Sigma) & \text{ if } \Sigma \text{ is closed,} \\
-\chi(\Sigma)-1 & \text{ if } \Sigma \text{ has exactly 1 boundary component,}\\
-\chi(\Sigma)-2 & \text{ otherwise.}
\end{cases}
\]
Then $\Gamma(\Sigma)$ is $(d-1)$-connected.
\end{theorem}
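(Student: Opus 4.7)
The goal is to show that every continuous map $\phi: S^k \to \Gamma(\Sigma)$ with $0 \leq k \leq d-1$ is null-homotopic. The strategy is to convert $\phi$ into a simplicial map, translate it into a family of smooth non-degenerate functions on a disk, and read the desired null-homotopy off the extended family via the machinery of Section 2.2. This mirrors Ivanov's strategy for the curve complex, with the adaptation being that the realizing construction of Section 2.2 produces compressing-disk curves (using the handlebody-filling Lemma 2.2.2) rather than merely essential curves.

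First, I would apply the simplicial approximation theorem to replace $\phi$ by a homotopic simplicial map $h: X \to \Gamma(\Sigma)$, where $X$ is a suitable triangulation of $S^k$. Since $\dim X = k \leq d-1 \leq d$, Lemma 2.3.3 says $h$ realizes some family $\{f_t : \Sigma \to \mathbb{R}\}_{t \in X}$ of smooth non-degenerate functions. I would then form the cone $CX$, which is homeomorphic to $D^{k+1}$ and has $X = \partial CX$, and extend the family across $CX$ by convex combination: pick any smooth reference function $f_\star: \Sigma \to \mathbb{R}$ satisfying the boundary conditions of Definition 2.1.1, and for a point $\tau = [t,s] \in CX$ (with $s = 1$ the cone point) set
\[
\tilde{f}_\tau = (1-s)\, f_t + s\, f_\star.
\]
This is a smooth family on $CX$ that agrees with $\{f_t\}$ on $X$, but it is not yet non-degenerate off of $X$.

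Now $\dim CX = k+1 \leq d$, so Theorem 2.3.2 provides an arbitrarily close approximation $\{f'_\tau\}_{\tau \in CX}$ by a family of smooth non-degenerate functions. Applying the construction of Section 2.2 to $\{f'_\tau\}_{\tau \in CX}$ yields a simplicial map $H: |C_{CX}| \to \Gamma(\Sigma)$; identifying $|C_{CX}|$ with $CX \cong D^{k+1}$, this provides a continuous extension of $h$ (hence of $\phi$) over a disk bounded by $S^k$, which is precisely a null-homotopy. The conclusion $\pi_k(\Gamma(\Sigma)) = 0$ for $0 \leq k \leq d-1$ follows, together with non-emptiness of $\Gamma(\Sigma)$ (guaranteed by $-\chi(\Sigma) \geq 2$ and the standard embedding).

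The technical crux is ensuring in the previous paragraph that $H$ actually restricts to $h$ on $X$, rather than to some other simplicial map realizing $\{f'_t\}_{t \in X}$. This is exactly what Remark 2.2.3 is set up to supply: if the approximation $f'_t$ is close enough to $f_t$ for $t \in X$, then for each vertex $v$ of $X$ the level-set curve $(f'_v)^{-1}(A'_v)$ has a non-singular component isotopic to the chosen $\alpha_v$, and the neighborhoods used in the nerve covering of $X$ can be taken as restrictions of neighborhoods in the nerve covering of $CX$. Choosing those same isotopy representatives at the vertices of $X$ when building $H$ forces $H|_X = h$. Once this compatibility is secured, the rest of the argument is formal, so this matching step is where I would expect to spend the most care.
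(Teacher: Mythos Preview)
Your proposal is correct and follows essentially the same strategy as the paper: simplicially approximate, realize by a family of smooth functions, extend over a ball by convexity/contractibility of the function space, approximate by non-degenerate functions via Theorem~2.3.2, and then invoke the extension remark to ensure the resulting realization on the ball restricts to the original $h$ on the boundary sphere. The only cosmetic difference is that the paper first invokes Lemma~2.3.1 (possibly degenerate functions) on the sphere and defers non-degeneracy to the approximation step on the ball, whereas you start from Lemma~2.3.3 on the sphere and lose non-degeneracy when coning---but after the approximation both routes converge to the same endgame.
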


\begin{proof}
Let $N \leq d-1$. It suffices to show that any continuous map $S^N \rightarrow \Gamma(\Sigma)$ is homotopic to a constant map. In this direction, let
$g: S^N \rightarrow \Gamma(\Sigma)$ be a continuous map. By the simplicial
approximation theorem, there exists a triangulation $T$ of $S^N$ with
homeomorphism $\varphi: S^N \rightarrow T$ and a simplicial map
$h: T \rightarrow \Gamma(\Sigma)$ such that $g$ is homotopic to
$|h| = h \circ \varphi$. By Lemma 2.3.1, $h$ realizes a family of smooth
functions $\{f_t\}_{t \in T}$. Since the space of smooth functions is
contractible, $\{f_t\}_{t \in T}$ can be extended to a family of smooth
functions  $\{f_t\}_{t \in B}$, where $B$ is a simplicial $(N+1)$-ball with
boundary $T$. We have $\dim B \leq d$, so Theorem 2.3.2 shows that
$\{f_t\}_{t \in B}$ can be approximated arbitrarily well by a family of smooth
non-degenerate functions $\{f_t'\}_{t \in B}$. Note that the proof of
Lemma 2.3.3 shows that $h$ realizes $\{f_t'\}_{t \in T}$ if $\{f_t'\}_{t \in B}$
is a good enough approximation to $\{f_t\}_{t \in B}$. Taking such an
approximation $\{f_t'\}_{t \in B}$, Remark 2.2.4 shows that $h$ can be extended
to a simplicial map $h': B \rightarrow \Gamma(\Sigma)$. Therefore $h$ and thus
also $g$ is homotopic to a constant map.
\end{proof}


\section{The homotopy type of the disk complex}

Let $\Sigma$ be a compact, connected surface of genus $g>1$ with $b$ boundary
components, standardly embedded in $S^3$. A
subcomplex of either the curve complex or disk complex has
dimension at most $3g-4+b$ since a pants decomposition has at most $3g-3+b$
curves. In Section 3.1, we show that we can do better than this in terms of
homotopy dimension. Explicitly, we show that $\Gamma(\Sigma)$ is homotopy
equivalent to a space of dimension
\[
d = \begin{cases}
2g-2 & \text{ if } g>0, b=0, \\
2g-3+b & \text{ if } g>0, b>0, \\
b-4 & \text{ if } g=0, b \geq 4.
\end{cases}
\]
In Section 3.2, we use the work of Birman, Broaddus, and Menasco \cite{BBM} to
show that the disk complex $\Gamma(\Sigma)$ contains an essential
$(2g-2)$-sphere in the case where $\Sigma$ is a genus $g > 1$ Heegaard surface
for the 3-sphere. Combining these results with Theorem 2.4.1 gives the two main
results of this paper, which we collect in Section 3.3.

\subsection{Homotopy dimension of the disk complex}
In Theorem 3.5 of \cite{H}, Harer shows that the curve complex is homotopy
equivalent to a wedge of spheres of the same dimension as in each of the cases
of Theorem 3.1.1 below. Employing the techniques used in his proof, we obtain
the following result. For convenience, let $\Sigma_g^b$ denote a compact,
connected surface of genus $g$ with $b$ boundary components, standardly embedded
in $S^3$.

\begin{theorem}
The disk complex $\Gamma(\Sigma_g^b)$ is homotopy equivalent to a space of
dimension
\[
d = \begin{cases}
2g-2 & \text{ if } g>0, b=0, \\
2g-3+b & \text{ if } g>0, b>0, \\
b-4 & \text{ if } g=0, b \geq 4.
\end{cases}
\]
and is empty otherwise.
\end{theorem}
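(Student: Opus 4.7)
The plan is to adapt Harer's proof of Theorem 3.5 in \cite{H}, where he established the analogous homotopy-dimension bound for the curve complex. We proceed by induction on the complexity $-\chi(\Sigma_g^b) = 2g - 2 + b$, which is at least $2$ by hypothesis.

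The base cases consist of surfaces with $-\chi = 2$, namely $\Sigma_0^4$, $\Sigma_1^2$, and $\Sigma_2^0$, for which $d$ takes the values $0$, $1$, and $2$ respectively. In each of these, we intend to produce an explicit deformation retract of $\Gamma(\Sigma_g^b)$ onto a complex of the asserted dimension by direct inspection of the compressing curves available in the standard embedding. The cases in which $\Gamma(\Sigma_g^b)$ is empty (e.g.\ $g=0$ with $b\le 3$, or other settings admitting no essential compressing disks) are handled by an elementary topological argument.

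For the inductive step, fix a standard embedding of $\Sigma_g^b$ in $S^3$ and assume the theorem for all standardly embedded surfaces of strictly smaller complexity. The key local input is the following: for each vertex $v = \langle \alpha \rangle$ of $\Gamma(\Sigma_g^b)$, cutting $\Sigma_g^b$ along a compressing disk $D$ with $\partial D = \alpha$ yields a surface $\Sigma'$, connected if $\alpha$ is non-separating and with two components $\Sigma_1'$, $\Sigma_2'$ if $\alpha$ is separating. The link of $v$ in $\Gamma(\Sigma_g^b)$ is then naturally identified with $\Gamma(\Sigma')$ in the non-separating case, and with the simplicial join $\Gamma(\Sigma_1') * \Gamma(\Sigma_2')$ in the separating case. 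By the inductive hypothesis, this link is homotopy equivalent to a complex of dimension at most $d-1$. Using this local description, I would build a deformation retraction of $\Gamma(\Sigma_g^b)$ onto a subcomplex of dimension $d$ by successively collapsing simplices of dimension greater than $d$, guided by the inductive control on each vertex link.

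The main obstacle will be the separating case of the inductive step. First, one must verify that after cutting along a separating compressing disk, each resulting piece is itself standardly embedded in $S^3$ as a Heegaard surface (possibly with further open disks removed), so that the inductive hypothesis applies to it. Second, one must confirm that the join dimension $\dim \Gamma(\Sigma_1') + \dim \Gamma(\Sigma_2') + 1$ does not exceed $d-1$ in each of the three regimes of the defining formula for $d$; this is a case-by-case accounting of how $g$ and $b$ redistribute across the two components of the cut. Finally, the local collapses must be shown to assemble coherently into a \emph{global} deformation retraction of $\Gamma(\Sigma_g^b)$ onto a $d$-dimensional subcomplex, which is likely to require a carefully chosen ordering on simplices (for instance via a discrete Morse-theoretic matching on the face poset) in the spirit of Harer's original surgical collapsing argument.
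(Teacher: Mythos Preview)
Your proposal has the right flavor—induction and link analysis following Harer—but the inductive mechanism you describe is not the one that actually works, and it differs substantively from what the paper does.

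The paper does not induct on $-\chi$ and does not analyze links of \emph{vertices} in $\Gamma(\Sigma_g^b)$. Instead it runs a double induction (outer on $g$, inner on $b$) and, for the closed case $b=0$, passes to the first barycentric subdivision $\Gamma^0(\Sigma_g^0)$, filtered by \emph{weight}: $X_k$ is the subcomplex spanned by barycenters of simplices of dimension $\geq k$. A weight-$k$ vertex corresponds to a $k$-simplex $\langle\alpha_0,\dots,\alpha_k\rangle$ of the original complex, and its link in $X_{k+1}$ is the join $\Gamma^0(F_1)*\cdots*\Gamma^0(F_t)$, where the $F_i$ are the components of $\Sigma_g^0$ cut along \emph{all} of $\alpha_0,\dots,\alpha_k$ simultaneously. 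Each $F_i$ has genus strictly less than $g$, so the outer inductive hypothesis applies to every piece, and an Euler-characteristic count shows this join has homotopy dimension at most $2g-3$. One then builds $X_k$ from $X_{k+1}$ by coning off these links, which propagates the bound $2g-2$ down the filtration. Your scheme, by contrast, examines only the link of a single vertex $\langle\alpha\rangle$ (cutting along one curve). There is no general principle that a bound on the homotopy dimension of every vertex link yields a bound on the homotopy dimension of the whole complex; the ``successive collapsing'' you invoke would, to be made precise, require exactly the weight filtration above (equivalently, a matching on the full face poset, not just information at the vertices). Note also that cutting along a single curve need not lower the genus of every piece, so your single-curve link does not fall directly under a genus-based inductive hypothesis.

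Two further points. First, the inner induction on $b$ in the paper goes through a lemma you have not anticipated: one defines $\widehat\Gamma(\Sigma_g^b)\subset\Gamma(\Sigma_g^b)$ by excluding curves that bound a pair of pants containing the boundary component $P_1$ and one other $P_j$, and proves that capping $P_1$ gives a homotopy equivalence $\Phi:\widehat\Gamma(\Sigma_g^b)\to\Gamma(\Sigma_g^{b-1})$. The full complex $\Gamma(\Sigma_g^b)$ is then recovered from $\widehat\Gamma(\Sigma_g^b)$ by adding stars of the excluded vertices along links that $\Phi$ carries isomorphically onto $\Gamma(\Sigma_g^{b-1})$, exhibiting $\Gamma(\Sigma_g^b)$ as a join of $\Gamma(\Sigma_g^{b-1})$ with a discrete set. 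This is how the $b>0$ cases are reduced to $b=0$, and nothing like it appears in your outline. Second, your proposed base case $\Sigma_2^0$ is not tractable by ``direct inspection'': $\Gamma(\Sigma_2^0)$ is infinite, and showing it has homotopy dimension $2$ is essentially the full closed-surface argument at $g=2$. The paper's induction instead bottoms out at $g=0$, which is genuinely elementary since $\Gamma(\Sigma_0^b)$ has actual simplicial dimension $b-4$.
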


\begin{proof}
Assume first that $g=0$. Note that a sphere with one, two, or three boundary
components does not admit any essential simple closed curves, so
$\Gamma(\Sigma_0^b)$ is empty for $b<4$. For $b \geq 4$, a pants decomposition
of $\Sigma_0^b$ consists of $b-3$ curves, and such a set of curves gives a
$(b-4)$-simplex in $\Gamma(\Sigma_0^b)$ which is of maximal dimension.

Assume now that $g>0$. We will proceed by double induction over $g$ (outer
induction) and $b$ (inner induction), so assume that the theorem has been proven
for all surfaces of genus smaller than $g$ with arbitrarily many boundary
components.

Let $\partial \Sigma_g^b = \{P_1, \dots, P_b\}$ be the boundary components of
$\Sigma_g^b$. Define $\widehat{\Gamma}(\Sigma_g^b)$ to be the subcomplex of
$\Gamma(\Sigma_g^b)$ consisting of simplices
$\langle \alpha_0, \dots, \alpha_k \rangle$ for which no curve $\alpha_i$
bounds a region on $\Sigma_g^b$ which contains $P_1$ and exactly one other
boundary component $P_j$. Note that $\widehat{\Gamma}(\Sigma_g^b) =
\Gamma(\Sigma_g^b)$ for $b=1$. Attaching a disk to $P_1$ defines a map
$\Phi: \widehat{\Gamma}(\Sigma_g^b) \rightarrow \Gamma(\Sigma_g^{b-1})$.

\begin{lemma}
If $b>0$, the map $\Phi$ is a homotopy equivalence.
\end{lemma}

\begin{proof}
Let $F_0$ be the surface obtained from $\Sigma_g^b$ by attaching a disk to
$P_1$. Choose a hyperbolic metric for $F_0$ so that every simple closed curve in
$F_0$ is represented by a unique geodesic. There is at least one point in $F_0$
and a neighborhood of that point through which no simple geodesic passes.
Selecting this point to lie on the disk attached to $P_1$ and the neighborhood
to contain the disk defines a map
$\Psi: \Gamma(\Sigma_g^{b-1}) \rightarrow \widehat{\Gamma}(\Sigma_g^b)$. Note
that $\Phi \circ \Psi$ is the identity on $\Gamma(\Sigma_g^{b-1})$. The proof
will be complete once we show that the homomorphisms induced by $\Phi$ on
homotopy groups are injective.

Let $T$ be a simplicial $n$-sphere and let
$f: T \rightarrow \widehat{\Gamma}(\Sigma_g^b)$ be a simplicial map. We will
show that $f$ is homotopic to $f' = \Psi \circ \Phi \circ f$. Let
$v_1, \dots, v_k$ be the vertices of $T$, and write
$\langle \alpha_i \rangle = f(v_i)$ and $\langle \alpha_i' \rangle = f'(v_i)$,
where $\alpha_i$ and $\alpha_i'$ are the geodesic representatives of each
isotopy class. Each $\alpha_i$ is isotopic to $\alpha_i'$ in $F_0$, and if
$\alpha_i'$ and $\alpha_j'$ are isotopic in $F_0$ then they are already equal in
$\Sigma_g^b$ by construction.

For all $i$, the isotopy of $\alpha_i$ to $\alpha_i'$ describes a properly
embedded annulus $A_i$ in $F_0 \times [0,1]$ with
$A_i \cap (F_0 \times 0) = \alpha_i$ and $A_i \cap (F_0 \times 1) = \alpha_i'$.
We may arrange the annuli so that they are pairwise transverse and that each one
is transverse to $P_1 \times [0,1]$. However, it is necessary to arrange the
annuli so that if $\alpha_i$ and $\alpha_j$ are disjoint, then either $A_i$ and
$A_j$ are disjoint or $A_i \cap A_j = \alpha_i' = \alpha_j'$.

To do this, consider $A_i$ and $A_j$ with $\alpha_i$ and $\alpha_j$ disjoint.
The intersection $A_i \cap A_j$ is a properly embedded submanifold in
both $A_i$ and $A_j$. At one end, $A_i$ and $A_j$ are disjoint and at the other
end they are either equal or disjoint, hence any unwanted intersections between
them are circles. Circles of intersection which are inessential can be removed
by an innermost circle argument. Without loss of generality, suppose now that
$\alpha$ is a circle of intersection that is nontrivial in $\pi_1(A_i)$. Since
$\pi_1(F_0 \times [0,1]) = \pi_1(F_0)$, then $\alpha$ must also be nontrivial in
$\pi_1(A_j)$, otherwise it would be an inessential circle in $F_0$. Hence
$\alpha_i' = \alpha_j'$. Now interchange the portion of $A_i$ containing
$\alpha_i'$ with the portion of $A_j$ containing $\alpha_j'$ so that the
intersection at $\alpha$ is eliminated. Continue in this way until all essential
circles of intersection are removed.

After we have removed these intersection circles, the annuli are in the
desired positions. Move the $A_i$ slightly (if necessary) so that $P_1 \times
[0,1]$ intersects only one annulus at a time, and let
$0 < t_1 < \cdots < t_m < 1$ be the times that these intersections occur.
Suppose $A_i$ meets $P_1$ at time $t_1$. Let $\beta_i$ be the curve
$A_i \cap (F_0 \times (t_1 + \epsilon))$, where $\epsilon > 0$ is chosen so that
$t_1 + \epsilon < t_2$. By construction, $\beta_i$ can be isotoped to be
disjoint from $\alpha_i$, and the positioning of the annuli guarantee that
if $\alpha_i$ is disjoint from $\alpha_j$ then so must $\beta_i$. If $\Lambda =
\langle v_i, v_{j_1}, \dots, v_{j_n} \rangle$ is a face of $T$ in the star
of $v_i$, then add the simplex
$\langle v_i, \beta_i, v_{j_1}, \dots, v_{j_n} \rangle$ to $T$ along
$\Lambda$. The map $f$ is then extended by taking $\beta_i$ to
$\langle \alpha_i \rangle$. Continuing in this way, we add simplices and extend
$f$ for each of the intersections at each time $t_i$ as we did for $t_1$.
Completing this process gives the desired homotopy of $f$ to $f'$.
\end{proof}

Returning to the proof of Theorem 3.1.1, consider the first barycentric
subdivision, $\Gamma^0(\Sigma_g^b)$, of $\Gamma(\Sigma_g^b)$, which is defined
to be the simplicial complex such that

\begin{enumerate}[label=(\roman*)]
\item $\Gamma^0(\Sigma_g^b)$ has a vertex $v$ for every simplex in
$\Gamma(\Sigma_g^b)$, where the \textit{weight} of $v$ is defined as the
dimension of the simplex it represents,
\item a chain of $t+1$ proper inclusions of simplices in $\Gamma(\Sigma_g^b)$
defines a $t$-simplex in $\Gamma^0(\Sigma_g^b)$ with its vertices being those
that represent the simplices in the chain.
\end{enumerate}

Define $X_k$ to be the subcomplex of $\Gamma^0(\Sigma_g^b)$ whose vertices have
weight greater than or equal to $k$.  We will construct $\Gamma^0(\Sigma_g^b)$
starting from $X_{3g-4}$ by adding vertices of decreasing weight together with
the simplices they span in each step.

The subcomplex $X_{3g-4}$ is just a discrete set of points, so it has dimension
less than $2g-2$. Suppose that $X_{k+1}$ has been shown to be homotopy
equivalent to a space of dimension no more than $2g-2$. Consider a vertex
$v = \langle \alpha_0, \alpha_1, \dots, \alpha_k \rangle$ of $X_k - X_{k+1}$. If
$F_1, \dots, F_t$ are the components of $\Sigma_g^b$ obtained by splitting along
$\alpha_0, \dots, \alpha_k$, then the link of $v$ in $X_{k+1}$ is readily
identified with the join of the complexes $\Gamma^0(F_1), \dots, \Gamma^0(F_t)$.
Supposing that, for $i=1, \dots, t$, each component $F_i$ has genus $g_i$ and
$b_i$ boundary components, observe that $\chi(F_i) = 2-2g_i-b_i$. Note that
$g_i < g$ for each $i=1, \dots, t$ and that $\sum_{i=1}^t b_i = 2k+2$.

Assume now that $b=0$. Then we have
\begin{align*}
\chi(\Sigma_g^b) &= \sum_{i=1}^t \chi(F_i) \\
&= \sum_{i=1}^t (2 - b_i - 2g_i) \\
&= 2t - 2k - 2 - \sum_{i=1}^t 2g_i
\end{align*}
and hence
\[
g = k - t + 2 +\sum_{i=1}^t g_i.
\]
Inductively, for all $i$, $\Gamma^0(F_i)$ is homotopy equivalent to a space of
dimension $2g_i + b_i - 3$, so the link of $v$ in $X_{k+1}$ is homotopy
equivalent to a space of dimension
\begin{align*}
(t-1) + \sum_{i=1}^t (2g_i + b_i - 3) = 2g - 3.
\end{align*}

The join of $v$ with its link in $X_{k+1}$ is thus homotopy equivalent to a
space of dimension $2g-2$. To finish the proof for $b=0$, observe that we obtain
$X_k$ by adding the join of every vertex in $X_k-X_{k+1}$ with its link in
$X_{k+1}$, so $X_k$ is indeed homotopy equivalent to a space of dimension
$2g-2$. By this inductive construction of $\Gamma(\Sigma_g^b)$, it follows that
$\Gamma(\Sigma_g^b)$ is homotopy equivalent to a space of dimension $2g-2$, so
the Theorem holds for $b=0$. By Lemma 3.1.2, it also holds for $b=1$.

Now assume $b>2$, and suppose that $\Gamma(\Sigma_g^{b-1})$ is homotopy
equivalent to a space of dimension $2g-3+(b-1) = 2g-4+b$. Lemma 3.1.2 shows that
$\widehat{\Gamma}(\Sigma_g^b)$ is homotopy equivalent to the same
space. Now $\Gamma(\Sigma_g^b)$ is obtained from $\widehat{\Gamma}(\Sigma_g^b)$
by adding the stars of the omitted vertices along their links: the curves
corresponding to these vertices each bound regions in $\Sigma_g^b$ containing
$P_1$ and exactly one other boundary component, and if two such curves are
disjoint then they are isotopic. The map $\Phi$ sends the link of such a vertex
homomorphically onto $\Gamma(\Sigma_g^{b-1})$. Therefore $\Gamma(\Sigma_g^b)$
has the homotopy type of the join of $\Gamma(\Sigma_g^{b-1})$ with a set of
points, and hence is homotopy equivalent a space of dimension $2g-3+b$.
\end{proof}


\subsection{An essential sphere in the disk complex of a Heegaard surface for
the 3-sphere}

We now briefly review the work of Birman, Broaddus, and Menasco \cite{BBM} that
describes a subcomplex $X(\Sigma_g^0)$ of the curve complex
$\mathcal{C}(\Sigma_g^0)$ which is topologically an essential $(2g-2)$-sphere in
$\mathcal{C}(\Sigma_g^0)$. Considering $\Sigma_g^0$ as a Heegaard surface for
the 3-sphere, the vertices of $X(\Sigma_g^0)$ correspond to isotopy classes of
curves that bound compressing disks for $\Sigma_g^0 \subset S^3$. Since
$X(\Sigma_g^0)$ is defined as the span of these vertices, it follows that
$X(\Sigma_g^0)$ is a subcomplex of $\Gamma(\Sigma_g^0)$. This subcomplex will
allow us to prove that $\pi_{2g-2}(\Sigma_g^0) \not \cong 0$.

We will adopt the conventions of \cite{BBM}. We first
consider the surface with one boundary component $\Sigma_g^1$ to be standardly
embedded in $S^3$. Let $z_1', z_2', \dots, z_{2g-2}'$ denote the curves on
$\Sigma_g^1$ in Figure 1 below.

\begin{figure}[H]
\centering
\includegraphics[width=0.8\textwidth]{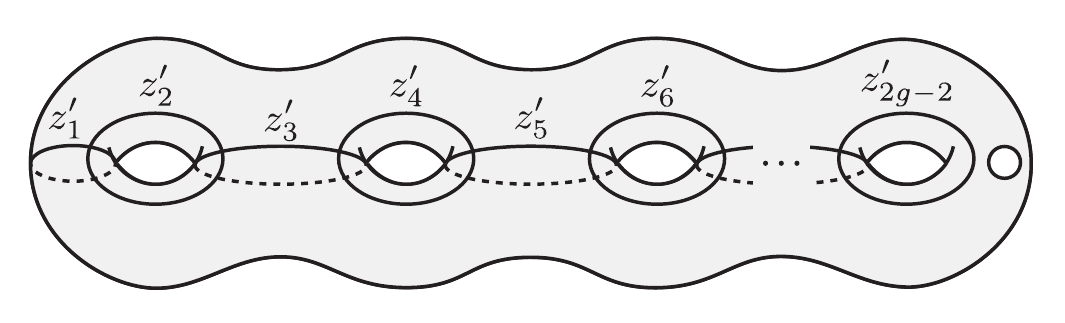}
\caption{The surface $\Sigma_g^1$ in $S^3$ with curves
$z'_1, z'_2, \dots, z'_{2g-2}$.}
\end{figure}

For every nonempty proper subset
$J = \{j, j+1, \dots, m\} \subsetneq \{1, 2, \dots, 2g\}$, let $N_J$ be a closed
regular neighborhood of $z'_j \cup z'_{j+1} \cup \cdots \cup z'_m$ in
$\Sigma_g^1$. If $|J|$ is even, then $N_J$ has a single boundary component,
which we denote by $x_J$ (see Figure 2).

\begin{figure}[H]
\centering
\includegraphics[width=0.8\textwidth]{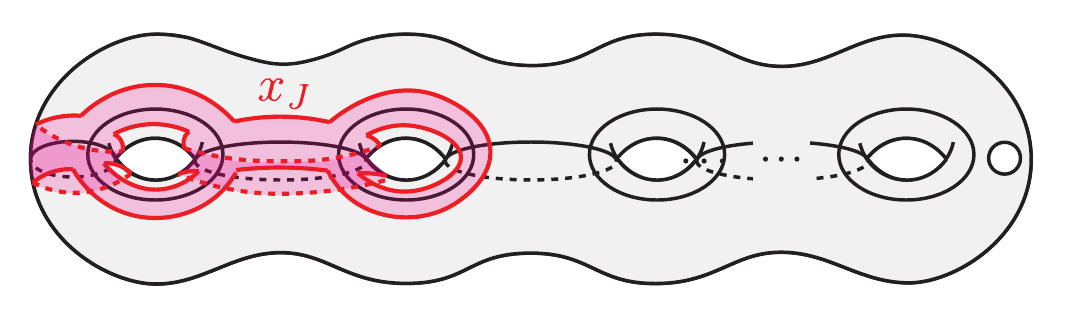}
\caption{The surface $\Sigma_g^1$ in $S^3$ and a typical curve $x_J$ in the case
where $|J|$ is even. Here, $J=\{1,2,3,4\}$ and $x_J$ is the boundary of a closed neighborhood $N_J$ of $z'_1 \cup z'_2 \cup z'_3 \cup z'_4$.}
\end{figure}

If $|J|$ is odd, then $N_J$ has two boundary components, and we let $x_J$ be the
boundary component of $N_J$ such that $x_J$ is in the ``back half'' of
$\Sigma_g^1$ if $j$ is even or $x_J$ is in the ``top half'' of $\Sigma_g^1$ if
$j$ is odd (see Figure 3).

\begin{figure}[H]
\centering
\includegraphics[width=1\textwidth]{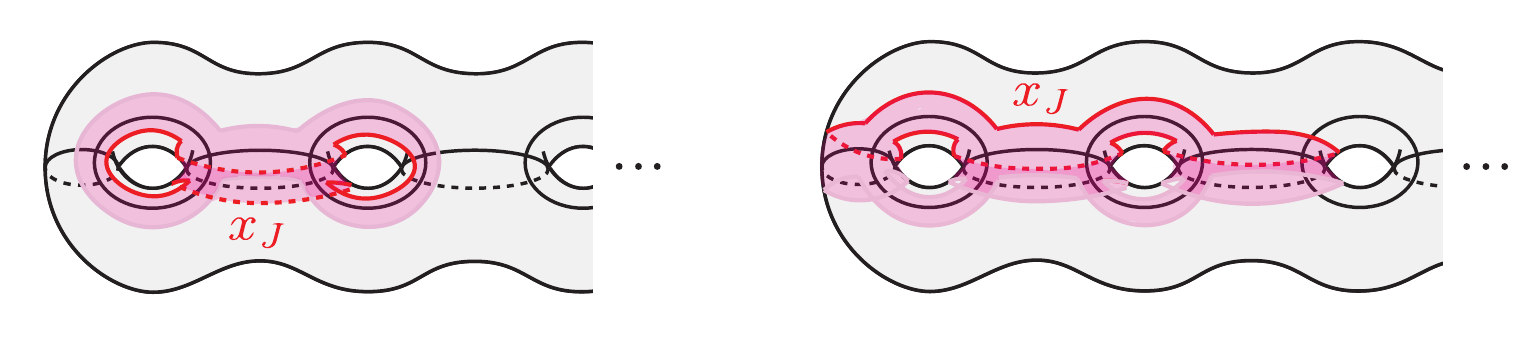}
\caption{(left) A typical curve $x_J$ in the case where $|J|$ is odd and $j$ is
even. (right) A typical curve $x_J$ in the case where $|J|$ is odd and $j$ is
odd.}
\end{figure}

Now let $X^0$ be the set
\[
X^0 = \{x_J : J =\{j, j+1, \dots, m\} \subsetneq \{1, 2, \dots, 2g\},
J \neq \emptyset\}
\]
and define $X(\Sigma_g^1)$ to be the subcomplex of all simplices of
$\mathcal{C}(\Sigma_g^1)$ whose vertices are in $X^0$. Notice that the vertices
in $X^0$ all correspond to isotopy classes of curves that bound compressing
disks for $\Sigma_g^1 \subset S^3$. Indeed, each curve $z_i'$ bounds a
compressing disk, so each curve $x_J$ (where $J = \{j, j+1, \dots, m\}$) is
isotopic to the boundary of a compressing disk obtained by
starting with either:
\begin{enumerate}[label={(\roman*)}]
\item two copies of each of the compressing disks bounded by the curves $z'_i$
with odd $i$ when $|J|$ is even,
\item one copy of each of the compressing disks bounded by the curves $z'_i$
with even $i$ when $|J|$ is odd and $j$ is even, or
\item one copy of each of the compressing disks bounded by the curves $z'_i$
with odd $i$ when $|J|$ is odd and $j$ is odd,
\end{enumerate}
and then banding these compressing disks along arcs lying on the remaining
$z'_i$ (see Figures 2, 3 (left), and 3 (right), respectively, for each of these
cases). Therefore $X(\Sigma_g^1)$ is a subcomplex of
$\Gamma(\Sigma_g^1) \subset \mathcal{C}(\Sigma_g^1)$.

Let $X(\Sigma_g^0)$ be the image of $X(\Sigma_g^1)$ under the homotopy
equivalence $\Phi$ from Lemma 3.1.2. Consider now the following result in
\cite{BBM}, which is part of a more general result.

\begin{prop}[\cite{BBM}, Proposition 30]
Let $g \geq 1$. The subcomplex $X(\Sigma_g^1)$ is topologically a sphere of
dimension $2g-2$, and, choosing an orientation, the class
$[X(\Sigma_g^1)] \in \widetilde{H}_{2g-2}(\mathcal{C}(\Sigma_g^1); \mathbb{Z})$
is nontrivial.
\end{prop}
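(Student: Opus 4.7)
The plan is first to give a combinatorial identification of $X(\Sigma_g^1)$. A direct geometric inspection of the curves $x_J$ on the surface shows that two vertices $x_J, x_{J'}$ span an edge in $X(\Sigma_g^1)$ iff the intervals $J, J' \subsetneq \{1, \ldots, 2g\}$ are \emph{compatible}: either one is nested in the other, or they are separated with at least one index strictly between them. In the remaining cases (adjacent disjoint, or properly overlapping), an intersection-number computation forces $x_J$ and $x_{J'}$ to cross essentially. Extending this analysis to higher simplices (a set spans a simplex iff it is pairwise compatible) identifies $X(\Sigma_g^1)$ with the ``compatibility complex'' $\mathcal{I}(2g)$ of proper nonempty intervals of $\{1, \ldots, 2g\}$. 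The first half of the proposition thus reduces to showing that $\mathcal{I}(n)$ is homeomorphic to $S^{n-2}$.

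I would prove $\mathcal{I}(n) \cong S^{n-2}$ by induction on $n$. The base case $n = 2$ consists of two non-adjacent vertices, i.e.\ $S^0$. For the inductive step I would analyze the link of the vertex $x_{\{1\}}$, showing by a case analysis that it is homeomorphic to $S^{n-3}$; as a sanity check, for $n = 4$ the link turns out to be a pentagonal $1$-cycle. Iterating this link-by-link computation at each vertex, together with an Euler-characteristic check and a verification that $\mathcal{I}(n)$ is a closed, simply-connected pseudomanifold for $n \geq 4$, yields the sphere conclusion. An attractive alternative is to realize $\mathcal{I}(n)$ as the boundary of an explicit $(n-1)$-dimensional convex polytope (a cyclohedron- or associahedron-type variant whose face lattice is indexed by pairwise-compatible interval families); this is conceptually cleaner but requires constructing the polytope and verifying the face-lattice isomorphism.

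For nontriviality of $[X(\Sigma_g^1)] \in \widetilde{H}_{2g-2}(\mathcal{C}(\Sigma_g^1); \mathbb{Z})$, the plan is to construct a cocycle in $\mathcal{C}(\Sigma_g^1)$ that pairs nontrivially with $[X(\Sigma_g^1)]$. A natural candidate counts signed intersection numbers between a test simplex and a family of arcs transverse to the chain $z'_1 \cup \cdots \cup z'_{2g-2}$; one then verifies that this cocycle evaluates to $\pm 1$ on $[X(\Sigma_g^1)]$ via an intersection-theoretic calculation exploiting the explicit embedding of $\Sigma_g^1$ in $S^3$. The main obstacle I anticipate is the sphere identification: the link of a vertex in $\mathcal{I}(n)$ does not decompose as a clean join of smaller compatibility complexes, so the $S^{n-3}$ conclusion requires careful combinatorial bookkeeping, and the polytope-realization route requires its own explicit construction. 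Nontriviality is conceptually cleaner once the sphere structure is in hand, though it still requires the concrete intersection computation against the chain $z_1' \cup \cdots \cup z_{2g-2}'$.
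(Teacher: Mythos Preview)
The paper does not prove this proposition at all: it is quoted verbatim as Proposition~30 of \cite{BBM} and used as a black box. So there is no ``paper's own proof'' to compare your proposal against; the actual argument lives in the Birman--Broaddus--Menasco paper, not here.

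That said, a few comments on your sketch. Your overall strategy---identify $X(\Sigma_g^1)$ with a purely combinatorial complex on proper nonempty subintervals of $\{1,\dots,2g\}$, then show that complex is a sphere---is essentially the approach taken in \cite{BBM}, and is the natural one. But what you have written is a plan, not a proof. The disjointness criterion (``nested or separated with a gap'') requires a genuine case analysis on the surface, with attention to the parity conventions in the definition of $x_J$ when $|J|$ is odd; you assert the outcome without doing the check. Your inductive sphere argument is also only gestured at: you say the link of $x_{\{1\}}$ ``turns out'' to be $S^{n-3}$ and that an Euler-characteristic and pseudomanifold check ``yields the sphere conclusion,'' but none of these steps is carried out, and as you yourself note, the link does not decompose cleanly, so this is real work. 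The nontriviality half is the vaguest part: ``construct a cocycle that counts signed intersections with a family of arcs'' is an idea, not an argument, and you have not specified the cocycle, verified it is closed, or computed the pairing. In \cite{BBM} nontriviality is obtained by rather different means (via the Steinberg module and the action of the mapping class group), so if you pursue your intersection-cocycle route you should expect to have to supply substantial detail.
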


In particular, this implies the following result.

\begin{theorem}
Let $g \geq 2$. Then $\pi_{2g-2}(\Gamma(\Sigma_g^0)) \not \cong 0$.
\end{theorem}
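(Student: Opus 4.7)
My plan is to exhibit a nontrivial class in $\widetilde{H}_{2g-2}(\Gamma(\Sigma_g^0); \mathbb{Z})$ and then promote it to a nontrivial element of $\pi_{2g-2}(\Gamma(\Sigma_g^0))$ via the Hurewicz theorem. All the machinery needed is already in place: Theorem 2.4.1 controls the connectivity, Lemma 3.1.2 gives a homotopy equivalence between the relevant disk complexes, and Proposition 3.2.1 supplies an essential sphere in the curve complex of $\Sigma_g^1$.

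First, I would interpret the essential sphere $X(\Sigma_g^1)$ as a subcomplex of the disk complex rather than merely of the curve complex. By the discussion preceding Proposition 3.2.1, every vertex of $X(\Sigma_g^1)$ bounds a compressing disk for the standardly embedded $\Sigma_g^1 \subset S^3$, so $X(\Sigma_g^1) \subset \Gamma(\Sigma_g^1)$; moreover, $\widehat{\Gamma}(\Sigma_g^1) = \Gamma(\Sigma_g^1)$ because $b = 1$. Proposition 3.2.1 asserts that $[X(\Sigma_g^1)]$ is nontrivial in $\widetilde{H}_{2g-2}(\mathcal{C}(\Sigma_g^1); \mathbb{Z})$, and since the inclusion $\Gamma(\Sigma_g^1) \hookrightarrow \mathcal{C}(\Sigma_g^1)$ sends the corresponding class in $\widetilde{H}_{2g-2}(\Gamma(\Sigma_g^1); \mathbb{Z})$ to this nontrivial class, the class $[X(\Sigma_g^1)]$ must already be nontrivial in $\widetilde{H}_{2g-2}(\Gamma(\Sigma_g^1); \mathbb{Z})$.

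Next, I would transfer this class via the homotopy equivalence $\Phi: \widehat{\Gamma}(\Sigma_g^1) \to \Gamma(\Sigma_g^0)$ from Lemma 3.1.2. Because $\Phi$ is a homotopy equivalence, $\Phi_*$ is an isomorphism on integral homology, so $\Phi_*[X(\Sigma_g^1)] = [X(\Sigma_g^0)] \in \widetilde{H}_{2g-2}(\Gamma(\Sigma_g^0); \mathbb{Z})$ is nontrivial.

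Finally, I would invoke Theorem 2.4.1. Since $\Sigma_g^0$ is closed and $-\chi(\Sigma_g^0) = 2g - 2$, the theorem gives that $\Gamma(\Sigma_g^0)$ is $(2g-3)$-connected. For $g \geq 2$ we have $2g - 2 \geq 2$, so the hypotheses of the Hurewicz theorem are satisfied and it yields
\[
\pi_{2g-2}(\Gamma(\Sigma_g^0)) \cong \widetilde{H}_{2g-2}(\Gamma(\Sigma_g^0); \mathbb{Z}),
\]
which is nontrivial by the previous step. There is no real obstacle in this argument: the proof is essentially bookkeeping, verifying that the Birman–Broaddus–Menasco cycle actually lives in the disk subcomplex and that its homology class survives transport along the inclusion $\Gamma(\Sigma_g^1) \hookrightarrow \mathcal{C}(\Sigma_g^1)$ and through the homotopy equivalence $\Phi$, after which the connectivity from Theorem 2.4.1 lets Hurewicz do the rest.
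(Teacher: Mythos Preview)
Your proposal is correct and follows essentially the same route as the paper: you lift the Birman--Broaddus--Menasco class into $\widetilde{H}_{2g-2}(\Gamma(\Sigma_g^1);\mathbb{Z})$ via the inclusion into $\mathcal{C}(\Sigma_g^1)$, push it through the homotopy equivalence $\Phi$ of Lemma~3.1.2 to $\Gamma(\Sigma_g^0)$, and then apply Hurewicz using the $(2g-3)$-connectivity from Theorem~2.4.1. If anything, you are slightly more explicit than the paper about why nontriviality in $\mathcal{C}(\Sigma_g^1)$ forces nontriviality in the subcomplex $\Gamma(\Sigma_g^1)$.
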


\begin{proof}
The class
$[X(\Sigma_g^1)] \in \widetilde{H}_{2g-2}(\mathcal{C}(\Sigma_g^1); \mathbb{Z})$
is nontrivial, so it must also be nontrivial as an element of
$\widetilde{H}_{2g-2}(\Gamma(\Sigma_g^1); \mathbb{Z})$. Then
$\widetilde{H}_{2g-2}(\Gamma(\Sigma_g^1); \mathbb{Z}) \not \cong 0$. The map
$\Phi: \Gamma(\Sigma_g^1) \rightarrow \Gamma(\Sigma_g^0)$ is a homotopy
equivalence, so
$\widetilde{H}_{2g-2}(\Gamma(\Sigma_g^0); \mathbb{Z}) \not \cong 0$. Since
$\Gamma_g^0$ is $(2g-3)$-connected by Theorem 2.4.1, the Hurewicz theorem
implies that $\pi_{2g-2}(\Gamma(\Sigma_g^0)) \not \cong 0$.
\end{proof}

\subsection{Main results}

We are now ready to state the main results of this paper. Combining Theorem
2.4.1, Theorem 3.1.1, and Theorem 3.2.2 proves the following result.

\begin{theorem}
If $\Sigma$ is a genus $g>1$ Heegaard surface for $S^3$, the disk complex
$\Gamma(\Sigma)$is homotopy equivalent to a nontrivial wedge of
$(2g-2)$-spheres.
\end{theorem}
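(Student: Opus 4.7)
The plan is to combine the three earlier results in the paper essentially verbatim, using a standard fact from algebraic topology that an $(n{-}1)$-connected CW complex of dimension at most $n$ is a wedge of $n$-spheres. First I would unpack Theorem~2.4.1 for a closed surface of genus $g>1$: there $d=-\chi(\Sigma)=2g-2$, so $\Gamma(\Sigma)$ is $(2g-3)$-connected, and in particular simply connected. Next, by Theorem~3.1.1 applied to $\Sigma=\Sigma_g^0$, $\Gamma(\Sigma)$ is homotopy equivalent to a CW complex $Y$ of dimension at most $2g-2$.

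With $Y$ in hand, the key step is the following standard lemma, which I would state and invoke: if $Y$ is a simply connected CW complex of dimension $\leq n$ that is $(n{-}1)$-connected (with $n\geq 2$), then $Y$ is homotopy equivalent to a wedge of $n$-spheres. The proof I have in mind is routine: by Hurewicz, $\pi_n(Y)\cong H_n(Y;\mathbb{Z})$, and since $\dim Y\leq n$ the group $H_n(Y;\mathbb{Z})$ is a subgroup of the free abelian cellular chain group $C_n(Y;\mathbb{Z})$, hence free abelian. Pick a basis $\{\alpha_i\}_{i\in I}$ of $\pi_n(Y)$, represent each $\alpha_i$ by a map $S^n\to Y$, and assemble them into
\[
\Phi:\bigvee_{i\in I}S^n\longrightarrow Y.
\]
By construction $\Phi$ induces an isomorphism on $H_n$, and both source and target have reduced homology concentrated in degree $n$, so $\Phi$ is a homology equivalence between simply connected CW complexes. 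Whitehead's theorem then makes $\Phi$ a homotopy equivalence.

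Applied with $n=2g-2$, this shows $\Gamma(\Sigma)\simeq \bigvee_{i\in I} S^{2g-2}$ for some indexing set $I$. Finally, Theorem~3.2.2 supplies $\pi_{2g-2}(\Gamma(\Sigma))\neq 0$, which forces $I$ to be nonempty and the wedge to be genuinely nontrivial, completing the proof.

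There is no serious obstacle here: all the substantive work is done in Theorems~2.4.1, 3.1.1, and~3.2.2. The only thing to be careful about is citing or justifying the ``$(n{-}1)$-connected plus dimension $n$ implies wedge of $n$-spheres'' fact with the correct hypotheses, and checking that $n=2g-2\geq 2$ so that simple connectivity and the Hurewicz hypothesis are in force (which holds because $g>1$).
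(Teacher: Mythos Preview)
Your proposal is correct and follows exactly the route the paper indicates: the paper's own proof consists solely of the sentence ``Combining Theorem 2.4.1, Theorem 3.1.1, and Theorem 3.2.2 proves the following result,'' and you have supplied precisely the standard algebraic-topology bridge (the $(n{-}1)$-connected, dimension $\leq n$ $\Rightarrow$ wedge of $n$-spheres fact via Hurewicz and Whitehead) that makes this combination work. Your care in checking $n=2g-2\geq 2$ and that the dimension-$\leq n$ model can be taken to be a CW complex is appropriate; the paper leaves these details implicit.
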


\begin{corollary}
If $\Sigma$ is a genus $g>1$ Heegaard surface for $S^3$, then it is
topologically minimal with index $2g-1$.
\end{corollary}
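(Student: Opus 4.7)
The plan is to read off the index directly from the homotopy type given by Theorem 3. Since $g>1$, $\Gamma(\Sigma)$ is nonempty, so by definition the topological index of $\Sigma$ is the smallest integer $n$ such that $\pi_{n-1}(\Gamma(\Sigma))$ is nontrivial; equivalently, it is one more than the connectivity dimension at which $\Gamma(\Sigma)$ first has nontrivial homotopy.

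First I would invoke Theorem 3 to identify $\Gamma(\Sigma)$ up to homotopy with a nontrivial wedge $\bigvee_\lambda S^{2g-2}$. Because any wedge of $(2g-2)$-spheres is $(2g-3)$-connected, this immediately gives $\pi_i(\Gamma(\Sigma)) \cong 0$ for every $1 \leq i \leq 2g-3$. Next, since the wedge is nontrivial (the index set $\lambda$ is nonempty), its reduced homology in degree $2g-2$ is a nontrivial free abelian group, and the Hurewicz theorem applied to the $(2g-3)$-connected space $\Gamma(\Sigma)$ yields $\pi_{2g-2}(\Gamma(\Sigma)) \cong \widetilde{H}_{2g-2}(\Gamma(\Sigma); \mathbb{Z}) \not\cong 0$.

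Putting these two facts together, $n-1 = 2g-2$ is the smallest value of $n-1$ for which $\pi_{n-1}(\Gamma(\Sigma))$ is nontrivial, so by definition the topological index of $\Sigma$ equals $n = 2g-1$, as claimed. There is no serious obstacle here: the entire content lies in Theorem 3, and the corollary amounts to translating its conclusion through the definition of topological index via the Hurewicz theorem.
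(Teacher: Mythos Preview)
Your proposal is correct and matches the paper's approach: the paper states this result as an immediate corollary of Theorem~3 (Theorem~3.3.1) without giving a separate proof, and your argument simply spells out the standard reasoning---$(2g-3)$-connectivity plus Hurewicz---that makes it immediate. If anything, the Hurewicz step is already absorbed into the paper's proof of Theorem~3.2.2, but re-deriving it from the wedge description is entirely fine.
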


This result confirms that $S^3$ contains topologically minimal surfaces, so
Bachman's conjecture is false.

\bibliographystyle{hamsplain}
\bibliography{mybibliography}

\providecommand{\bysame}{\leavevmode\hbox to3em{\hrulefill}\thinspace}
\begin{thebibliography}{10}

\bibitem{Appel}
Daniel Appel, \emph{Topological indices of surfaces in the 3-sphere},
  Bachelor's thesis, Princeton University, 2010.

\bibitem{B}
David Bachman, \emph{Topological index theory for surfaces in 3-manifolds},
  Geometry \& Topology \textbf{14} (2010), no.~1, 585--609.

\bibitem{BJ}
David Bachman and Jesse Johnson, \emph{On the existence of high index
  topologically minimal surfaces}, Mathematical Research Letters \textbf{17}
  (2009), no.~3, 389–394.

\bibitem{BBM}
Joan Birman, Nathan Broaddus, and William Menasco, \emph{Finite rigid sets and
  homologically nontrivial spheres in the curve curve complex of a surface},
  Journal of Topology and Analysis \textbf{7} (2015), no.~1, 47--71.

\bibitem{CR}
Marion Campisi and Matt Rathbun, \emph{Hyperbolic manifolds containing high
  topological index surfaces}, Pacific Journal of Mathematics \textbf{296}
  (2018), no.~2, 305–319.

\bibitem{G}
David Gay, \emph{Functions on surfaces and constructions of manifolds in
  dimensions three, four and five}, arXiv:1701.01711.

\bibitem{H}
John~L. Harer, \emph{The virtual cohomological dimension of the mapping class
  group of an orientable surface}, Inventiones mathematicae \textbf{84} (1986),
  no.~1, 157–176.

\bibitem{I}
N.~V. Ivanov, \emph{{Complexes of curves and Teichm\"uller spaces}},
  Mathematical notes of the Academy of Sciences of the USSR \textbf{49} (2016),
  no.~5, 479–484.

\bibitem{L1}
Jung~Hoon Lee, \emph{On topologically minimal surfaces of high genus},
  Proceedings of the American Mathematical Society \textbf{143} (2015),
  2725--2730.

\bibitem{L2}
\bysame, \emph{Bridge spheres for the unknot are topologically minimal},
  Pacific Journal of Mathematics \textbf{282} (2016), no.~2, 437–443.

\bibitem{A}
Andrzej Nag\'orko, \emph{{Characterization and topological rigidity of
  N\"obeling manifolds}}, Memoirs of the American Mathematical Society
  \textbf{223} (2013), no.~1048.

\end{thebibliography}
\end{document}